\documentclass[11pt, reqno, english]{amsart}  
\usepackage[utf8]{inputenc}
\usepackage[T1]{fontenc}
\usepackage{amsmath,amsthm}
\usepackage{amsfonts,amssymb}
\usepackage{url}
\usepackage{mathtools}  
\usepackage[colorlinks=true,urlcolor=blue,linkcolor=red,citecolor=magenta]{hyperref}
\usepackage{enumerate, paralist}
\usepackage{tikz-cd}
\usepackage[left=1in,right=1in,top=1in,bottom=1in]{geometry}

\theoremstyle{plain}
\newtheorem{theorem}{Theorem}[section]

\newtheorem{corollary}[theorem]{Corollary}
\newtheorem{proposition}[theorem]{Proposition}

\theoremstyle{definition}
\newtheorem{definition}[theorem]{Definition}

\newtheorem{problem}[theorem]{Problem}

\newtheorem*{claim}{Claim}

\newcommand{\R}{\mathbb{R}}
\newcommand{\C}{\mathbb{C}}
\newcommand{\Z}{\mathbb{Z}}

\DeclareMathOperator{\coind}{\mathrm{coind}}
\renewcommand{\Re}{\mathrm{Re}}

\linespread{1.2}

\begin{document}

\title[A nonlinear Lazarev--Lieb theorem]{A nonlinear Lazarev--Lieb theorem: \\ $L^2$-orthogonality via motion planning}



\author{Florian Frick}
\address[FF]{Dept.\ Math.\ Sciences, Carnegie Mellon University, Pittsburgh, PA 15213, USA}
\email{frick@cmu.edu} 

\author{Matt Superdock}
\address[MS]{Dept.\ Math.\ Sciences, Carnegie Mellon University, Pittsburgh, PA 15213, USA}
\email{msuperdo@andrew.cmu.edu}


\begin{abstract}
\small
Lazarev and Lieb showed that finitely many integrable functions from the unit interval to~$\C$ can be simultaneously annihilated in the $L^2$ inner product by a smooth function to the unit circle. Here we answer a question of Lazarev and Lieb proving a generalization of their result by lower bounding the equivariant topology of the space of smooth circle-valued functions with a certain $W^{1,1}$-norm bound. Our proof uses a relaxed notion of motion planning algorithm that instead of contractibility yields a lower bound for the $\Z/2$-coindex of a space.
\end{abstract}

\date{July 31, 2019}
\maketitle

\section{Introduction}

In 1965 Hobby and Rice established the following result:

\begin{theorem}[Hobby and Rice~\cite{hobby1965}]\label{hr}
  Let $f_1, \ldots , f_{n} \in L^{1}([0, 1]; \R)$. Then there exists ${h\colon
  [0, 1] \rightarrow \{\pm 1\}}$ with at most $n$ sign changes, such that for
  all~$j$,
  $$\int_{0}^{1}f_{j}(x)h(x)dx = 0.$$
\end{theorem}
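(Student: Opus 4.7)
The plan is to use the Borsuk--Ulam theorem, reducing the existence of $h$ to finding a zero of a continuous, odd map from a sphere to Euclidean space.

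First I would parameterize sign functions with at most $n$ sign changes by the sphere $S^n$. Given $y = (y_0, \ldots, y_n) \in S^n \subset \R^{n+1}$, I define a partition $0 = s_0 \le s_1 \le \cdots \le s_{n+1} = 1$ of $[0,1]$ by setting $s_{i+1} - s_i = y_i^2$ (these widths sum to $1$ since $y \in S^n$). On each interval $[s_i, s_{i+1}]$ of positive length, declare $h_y(x) = \mathrm{sign}(y_i) \in \{\pm 1\}$ (the behavior on the null set where $y_i = 0$ is irrelevant). This function has at most $n$ sign changes, located at the partition points.

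Next I would define $\Phi\colon S^n \to \R^n$ by
\[
  \Phi(y) = \left(\int_0^1 f_1(x) h_y(x)\, dx, \; \ldots, \; \int_0^1 f_n(x) h_y(x)\, dx\right).
\]
Two properties need verification. First, $\Phi$ is odd: negating $y$ preserves the widths $y_i^2$ (and hence the partition) but flips every sign $\mathrm{sign}(y_i)$, so $h_{-y} = -h_y$ almost everywhere and $\Phi(-y) = -\Phi(y)$. Second, $\Phi$ is continuous: the partition endpoints $s_i(y)$ depend continuously on $y$, so for a fixed $f_j \in L^1$, shifting an endpoint by $\varepsilon$ perturbs $\int f_j h_y$ by at most the $L^1$-integral of $|f_j|$ over a small interval, which tends to $0$ by absolute continuity of the integral. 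Sign flips only occur where the width $y_i^2$ shrinks to $0$, where again absolute continuity controls the change.

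With continuity and oddness established, Borsuk--Ulam produces a point $y^\ast \in S^n$ with $\Phi(y^\ast) = 0$, and $h = h_{y^\ast}$ is the desired function. I expect the only mildly subtle step to be the continuity check at points where some $y_i = 0$ (so an interval degenerates and a sign change "disappears" or "appears"); this is handled by the $L^1$ absolute continuity argument above, and there is no real obstacle beyond a careful bookkeeping of the partition.
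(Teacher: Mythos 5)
Your proof is correct, and it is exactly Pinkus' classical Borsuk--Ulam proof of the Hobby--Rice theorem (cited in the paper as~\cite{pinkus1976}): parameterize step functions with at most $n$ sign changes by $y\in S^n$ via interval widths $y_i^2$ and signs $\mathrm{sign}(y_i)$, check oddness and $L^1$-continuity, and apply Borsuk--Ulam. The paper reproves Theorem~\ref{hr} by a genuinely different route, as a warm-up for its main machinery: instead of a single explicit global parameterization $y\mapsto h_y$, it lifts the $\{\pm1\}$-valued step functions $Z$ to the space $Y$ of nondecreasing integer-valued step functions, equips $Y$ with the pointwise preorder and a ``lifted motion planning algorithm'' (concatenating $g_0$ and $g_1$ at a moving breakpoint), and invokes Theorem~\ref{mpa} to build the $\Z/2$-map $S^n\to Z$ inductively, dimension by dimension, before finishing with Borsuk--Ulam just as you do. What your approach buys is brevity and transparency: for piecewise-constant targets the quadratic parameterization is available and short, and the continuity check is a clean absolute-continuity argument. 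What the paper's approach buys is generality: for the smooth circle-valued setting of Theorem~\ref{main} there is no comparably simple finite-dimensional parameterization of the candidate functions $h$ by a sphere, and the inductive motion-planning construction (together with the preorder, which tracks the eventual $W^{1,1}$-norm bound) replaces it. The paper's proof of Theorem~\ref{hr} is therefore deliberately indirect; it exists to exhibit the template that is later applied where a Pinkus-style explicit map is unavailable.
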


If we restrict the $f_{j}$ to lie in $L^{2}([0, 1]; \R)$, we can view this as
an orthogonality result in the $L^{2}$ inner product.
The Hobby--Rice theorem and its generalizations have found a multitude of
applications, ranging from mathematical physics~\cite{lieb2013} and 
combinatorics~\cite{alon1987} to the geometry of spatial curves~\cite{aslam2018}.

The theorem also holds for $f_{1}, \ldots , f_{n} \in L^{1}([0, 1];
\mathbb{C})$, provided $h$ is allowed $2n$ sign changes, by splitting the
$f_{j}$ into real and imaginary parts. Lazarev and Lieb showed that for
complex-valued~$f_{j}$, the function $h$ can be chosen in $C^{\infty}([0, 1];
S^{1})$, where $S^{1}$ denotes the unit circle in~$\C$:

\begin{theorem}[Lazarev and Lieb~\cite{lazarev2013}]\label{ll}
  Let $f_{1}, \ldots , f_{n} \in L^{1}([0, 1]; \C)$. Then there exists $h \in
  C^{\infty}([0, 1]; S^{1})$ such that for all~$j$,
  $$\int_{0}^{1}f_{j}(x)h(x)dx = 0.$$
\end{theorem}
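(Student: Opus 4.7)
The plan is to prove Theorem~\ref{ll} via a $\Z/2$-equivariant Borsuk--Ulam argument. The map
\[
A\colon C^\infty([0,1];S^1)\to\C^n,\qquad h\mapsto\Bigl(\int_0^1 f_j(x)h(x)\,dx\Bigr)_{j=1}^n
\]
is complex-linear in $h$, hence $\Z/2$-equivariant when we equip the source with the involution $h\mapsto -h$ and the target $\C^n\cong\R^{2n}$ with antipodal negation. It therefore suffices to construct a continuous $\Z/2$-equivariant map $\iota\colon S^{2n}\to C^\infty([0,1];S^1)$ (with $S^{2n}$ under the antipodal action); the composition $A\circ\iota$ is then an odd continuous map $S^{2n}\to\R^{2n}$, so Borsuk--Ulam yields $v^*\in S^{2n}$ with $A(\iota(v^*))=0$, and $h=\iota(v^*)$ is the required annihilator.

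To build $\iota$, I would start from the classical Hobby--Rice step-function parametrization: for $v=(v_0,\ldots,v_{2n})\in S^{2n}$, subdivide $[0,1]$ into consecutive intervals $I_j$ of lengths $v_j^2$ and set $\iota_0(v)(x)=\mathrm{sign}(v_j)$ on $I_j$. Since the lengths depend only on $|v_i|$ while $\mathrm{sign}(v_j)$ flips under $v\mapsto -v$, we have $\iota_0(-v)=-\iota_0(v)$. To upgrade this step function to a smooth $S^1$-valued map, I would then replace each interior sign change at a partition point $x_j$ by a half-circle arc of the form $\mathrm{sign}(v_{j-1})\cdot e^{i\pi s(x)}$ on a window of half-width proportional to $\min(v_{j-1}^2, v_j^2)$, where $s$ is a fixed smooth ramp from $0$ to $1$ with all derivatives vanishing at the endpoints of the window. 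The partition, the window widths, and the ramp all depend only on $|v_i|$, so the leading factor $\mathrm{sign}(v_{j-1})$ is the only piece that flips with $v$, preserving $\iota(-v)=-\iota(v)$.

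The main obstacle will be the continuity (and smoothness) of $\iota$ across the degenerate stratum $\{v_j=0\text{ for some }j\}$, where intervals collapse, signs become ambiguous, and adjacent sign changes can appear, disappear, or merge. The rescue is quantitative: a coordinate $v_j$ near $0$ controls both the measure of $I_j$ and the widths of the adjacent transition windows (each $O(v_j^2)$); any discrepancy between two nearby values of $\iota$ is therefore supported on a set of vanishing measure, and absolute continuity of the Lebesgue integral makes $A\circ\iota\colon S^{2n}\to\R^{2n}$ continuous on the whole sphere, even though $\iota$ itself is only continuous into $L^1$. A modest refinement of the construction near the degenerate stratum---letting two abutting transition windows coalesce into a single smooth arc as their intervening interval shrinks---ensures that $\iota(v)\in C^\infty([0,1];S^1)$ for every $v\in S^{2n}$, at which point Borsuk--Ulam completes the proof.
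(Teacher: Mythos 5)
Your high-level strategy---build a $\Z/2$-equivariant map from a sphere into $C^\infty([0,1];S^1)$, then apply Borsuk--Ulam to its composition with the integration functional---is exactly the strategy of the paper, and like the paper it adapts Pinkus' proof of the Hobby--Rice theorem. But your construction of the equivariant map is genuinely different, and it contains a gap that, not coincidentally, is precisely what the paper's ``lifted motion planning algorithm'' machinery is built to handle.

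The problem sits on the degenerate stratum $\{v\in S^{2n}: v_j=0 \text{ for some } j\}$. You prescribe transition windows at each partition point $x_j$ with half-width proportional to $\min(v_{j-1}^2,v_j^2)$. When $v_j\to 0$, the interval $I_j$ collapses \emph{and simultaneously} both adjacent windows (at $x_j$ and $x_{j+1}$) collapse, since each is bounded by $v_j^2$. If $v_{j-1}$ and $v_{j+1}$ happen to have opposite signs, the limiting $\iota(v)$ therefore has a genuine jump discontinuity at $x_j=x_{j+1}$: there is no window left in which to insert a smooth arc. So on this stratum $\iota(v)$ fails to lie in $C^\infty([0,1];S^1)$ at all, let alone vary $L^1$-continuously. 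Your last sentence gestures at the fix (``letting two abutting transition windows coalesce into a single smooth arc''), but this is exactly the nontrivial content of the theorem and cannot be dispatched as a ``modest refinement'': one has to design window widths, ramp placements, and the handling of $\mathrm{sign}(0)$ so that all of these interpolate continuously and $\Z/2$-equivariantly as multiple adjacent $v_j$'s approach zero at once, while keeping each $\iota(v)$ smooth. Note also that because a shrinking interval of the ``same-sign'' type produces a vanishingly narrow full loop around $S^1$, the map $\iota$ can at best be $L^1$-continuous (never $C^1$-continuous), which you correctly observe, but this compounds the bookkeeping since the smoothness of each individual $\iota(v)$ must be verified separately from the continuity of the family.

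The paper sidesteps this entirely by never parametrizing directly by interval lengths. Instead it lifts to nondecreasing \emph{real-valued} smooth functions $g$ (so $h=e^{ig}$), and builds the $\Z/2$-map $S^n\to C^\infty([0,1];S^1)$ inductively, one dimension at a time, by extending a map on $S^{k}$ to $B^{k+1}$ using a one-parameter family of ramp interpolations. The ``lifted mpa continuity property'' is an abstraction of exactly the estimate you need at the degenerate stratum: if the two endpoint functions have close images in $Z$ and the ramp window is small, the interpolating path stays close in $Z$. Working inductively means only one new transition is introduced per dimension, so there is never a moment where two independent windows must coalesce; the nesting is automatic. If you want to repair your direct construction, the cleanest route is probably the same lift: work with nondecreasing real-valued phase functions $g_v$ and smooth the step function $\gamma_0(v)$ rather than the $S^1$-valued $\iota_0(v)$, so that merging transitions simply add in $\R$ rather than needing to cancel on $S^1$. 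Even so, the continuity across the stratum where the number of sign changes drops must be argued, and that is where the technical weight lies.

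Finally, your argument as written proves only the qualitative statement of Theorem~\ref{ll}. The paper's inductive construction additionally tracks the preorder $y^*\preceq \alpha_n(x)\preceq\rho^n(y^*)$, which is what yields the quantitative $W^{1,1}$-bound of Corollary~\ref{cor:main} and the nonlinear generalization of Theorem~\ref{main}; if you flesh out your construction you should check whether a comparable bound on the total variation of the lifted phase falls out.
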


If $h$ is obtained by smoothing the function $h_{0}$ guaranteed by
Theorem~\ref{hr}, then we would expect its $W^{1, 1}$-norm, given by
$$\|h\|_{W^{1, 1}} = \int_{0}^{1}|h(x)|dx + \int_{0}^{1}|h'(x)|dx$$
to be approximately $1 + 2 \pi n$, since $|h(x)| = 1$, and each sign change of
$h_{0}$ contributes approximately $\pi$ to $\int_{0}^{1}|h'(x)|dx$. However,
Lazarev and Lieb did not establish any bound on the $W^{1, 1}$-norm of~$h$
and left this as an open problem; this was accomplished by Rutherfoord~\cite{rutherfoord2013}, 
who established a bound of $1 + 5 \pi n$. Here we improve this bound to $1 + 2 \pi n$;
see Corollary~\ref{cor:main}.

The Hobby--Rice theorem has a simple proof due to Pinkus~\cite{pinkus1976} via the Borsuk--Ulam
theorem, which states that any map $f\colon S^{n} \rightarrow \R^{n}$ with
$f(-x) = -f(x)$ for all $x \in S^{n}$ has a zero. 
Lazarev and Lieb asked whether there is a similar proof of their result and write:
``There seems to be no way to adapt the proof of the Hobby--Rice Theorem
 (which involves a fixed-point argument).'' Rutherfoord~\cite{rutherfoord2013} offered a simplified proof of Theorem~\ref{ll} based on Brouwer's fixed point theorem. Here we give a proof using the Borsuk--Ulam theorem directly, which adapts Pinkus' proof of the Hobby--Rice theorem. The advantage of this approach is that our main 
result gives a nonlinear extension of the result of Lazarev and Lieb; see Section~\ref{sec:construction} for the proof:

\begin{theorem}\label{main}
  Let $\psi\colon C^{\infty}([0, 1]; S^{1}) \rightarrow \mathbb{R}^{n}$ 
  be continuous with respect to the $L^{1}$-norm such that $\psi(-h)= - \psi(h)$ for all $h \in C^{\infty}([0, 1]; S^{1})$. 
  Then there exists $h \in C^{\infty}([0, 1]; S^{1})$ with $\psi(h) = 0$ and
  $\|h\|_{W^{1, 1}} \le 1 + \pi n$.
\end{theorem}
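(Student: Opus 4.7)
The plan is to mimic Pinkus's proof of the Hobby--Rice theorem: construct a $\mathbb{Z}/2$-equivariant continuous map $\Phi\colon S^{n} \to C^{\infty}([0,1]; S^{1})$ whose image lies inside the sublevel set $\{h : \|h\|_{W^{1,1}} \le 1 + \pi n\}$, where the antipodal action on $S^{n}$ corresponds to $h \mapsto -h$ on the target. Composing with $\psi$ then yields an odd continuous map $\psi \circ \Phi\colon S^{n} \to \R^{n}$, and the classical Borsuk--Ulam theorem supplies $x_{0} \in S^{n}$ with $\psi(\Phi(x_{0})) = 0$; the corresponding $h = \Phi(x_{0})$ is the function the theorem asks for.

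To build $\Phi$, given $x = (x_{0}, \dots, x_{n}) \in S^{n}$ I would set $t_{i} = x_{i}^{2}$ (so $\sum t_{i} = 1$) and partition $[0,1]$ into consecutive intervals $I_{0}, \dots, I_{n}$ of lengths $t_{0}, \dots, t_{n}$. On the interior of $I_{i}$, the ``target value'' of $\Phi(x)$ is $\sigma_{i}(x) := \mathrm{sign}(x_{i}) \cdot (-1)^{i} \in \{\pm 1\}$; since $\sigma_{i}(-x) = -\sigma_{i}(x)$, this step-function model is automatically equivariant under $h \mapsto -h$ and mirrors Pinkus's parametrization. To land in $C^{\infty}([0,1]; S^{1})$, I would replace each jump between consecutive target values by a smooth monotone arc on $S^{1}$, always traversed counterclockwise (so $\arg h$ is non-decreasing on the transition). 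This rule commutes with $h \mapsto -h$, because negating $h$ shifts $\arg h$ by $\pi$ without reversing its monotonicity. Each arc has length at most $\pi$, so the $n$ interior transitions contribute at most $\pi n$ to $\int_{0}^{1} |h'|\,dx$; together with $|h| \equiv 1$ this gives $\|\Phi(x)\|_{W^{1,1}} \le 1 + \pi n$.

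The hard part is ensuring that $\Phi$ is actually continuous in the $L^{1}$-topology and genuinely takes values in smooth functions, even at the degenerate points of $S^{n}$ where some $x_{i}=0$ and hence $|I_{i}|=0$. There, $\sigma_{i}(x)$ is ill-defined and flips sign as $x_{i}$ crosses zero; $L^{1}$-continuity survives only because $|I_{i}|\to 0$ at the same time, and the two smoothing windows at the endpoints of the vanishing $I_{i}$ risk colliding. The fix is to let each transition window have width proportional to $\min(t_{i-1}, t_{i}, t_{i+1})$ (or more carefully, some symmetric function vanishing when any neighboring $t_j$ vanishes), so that windows never overlap, remain smooth, respect the equivariance, and still keep the arclength bound $\int|h'|\le\pi n$ in force in the limit. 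This is precisely the bookkeeping the authors seem to encode in their ``relaxed motion planning algorithm'' language, and I expect it to be the technical core of the proof. Once $\Phi$ has been assembled with these properties, the Borsuk--Ulam deduction is one line.
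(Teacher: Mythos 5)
Your high-level strategy---construct an odd continuous map $\Phi\colon S^n \to C^\infty([0,1];S^1)$ landing in the $W^{1,1}$-ball of radius $1+\pi n$, then compose with $\psi$ and apply Borsuk--Ulam---does match the paper. But the way you propose to build $\Phi$ is genuinely different, and as sketched it has a gap exactly at the degenerate points you flag. You parametrize directly \`a la Pinkus ($t_i = x_i^2$, intervals $I_i$, smoothed boundaries), and propose transition windows whose width vanishes whenever a neighboring $t_j$ vanishes. Take a point with $t_i = 0$ and $\sigma_{i-1} \ne \sigma_{i+1}$: your two transition windows at the (now coincident) endpoints of $I_i$ both have width zero, and the net angle change across that point is $\pi$, so the would-be value $\Phi(x)$ is a function with a jump, not an element of $C^\infty([0,1];S^1)$. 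The obvious repair---keeping those windows at positive width---breaks $L^1$-continuity of $\Phi$: as $x_i$ crosses $0$, the adjacent angle increments $(\Delta_i,\Delta_{i+1})$ flip between $(0,0)$ and $(\pi,\pi)$ (when $\mathrm{sign}(x_{i-1})=\mathrm{sign}(x_{i+1})$), so the cumulative angle changes by $2\pi$ inside a window of fixed positive width, giving an $L^1$-jump in $e^{ig}$. You are caught between two constraints that pull in opposite directions, and ``some symmetric function vanishing when any neighboring $t_j$ vanishes'' does not resolve the tension.

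The paper avoids this by not parametrizing all of $S^n$ at once. Instead it works in the angle space $Y$ of \emph{nondecreasing smooth} functions $g\colon[0,1]\to\R$ (with $\rho(g)=g+\pi$, $\phi(g)=e^{ig}$), builds $\alpha_k\colon S^k\to Y$ dimension by dimension, and fills in each new disk by the explicit interpolation $s_w(g_0,g_1)(t)(x) = \bigl(1-\tau(\tfrac{x-(1-t)}{w})\bigr)g_0(x)+\tau(\tfrac{x-(1-t)}{w})g_1(x)$, which is smooth and nondecreasing for every $w>0$---so no non-smooth function ever arises. The $2\pi$-ambiguity you would have to fight is absorbed by the formalism: $\alpha_k$ is permitted to be \emph{discontinuous} (it jumps by a multiple of $2\pi$ across the equator), and only $\phi\circ\alpha_k$ is required to be continuous; that is precisely what the ``lifted mpa'' continuity condition certifies. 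The final $W^{1,1}$ bound then falls out immediately because for positive $x$ the angle function $\alpha_n(x)$ has range in $[0,\pi n]$, so $\int|h'| = g(1)-g(0)\le\pi n$. In short: you have correctly identified where the difficulty lives, but your direct Pinkus-style construction does not dispose of it, while the paper's inductive lift to nondecreasing functions is designed to make the problem disappear.
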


This is a non-linear extension of Theorem~\ref{ll} since for given $f_{1}, \ldots , f_{n} \in L^{1}([0, 1]; \C)$ the map $\psi(h) = (\int_{0}^{1}f_{j}(x)h(x)dx)_{j}$ is continuous (see Section~\ref{sec:topologies}) and linear, so in particular, $\psi$ satisfies $\psi(-h) = -\psi(h)$.
Using the $L^{1}$-norm is no restriction; as we show in the next section, the
$L^{p}$ norms on $C^{\infty}([0, 1]; S^{1})$ for $1 \le p < \infty$ are all
equivalent, so we could replace $L^{1}$ with any such $L^{p}$. In fact, the
only relevant feature of the $L^{1}$-norm is that functions $h_{1}, h_{2}$ are
close in the $L^{1}$-norm if $h_{1}, h_{2}$ are uniformly close outside of a
set of small measure. As a consequence, we
recover the result of Lazarev and Lieb, with a $W^{1,1}$-norm bound of $1 + 2 \pi n$ since
$\psi$ takes values in $\C^{n} \cong \R^{2n}$; see Section~\ref{sec:topologies} for the proof:

\begin{corollary}\label{cor:main}
  Let $f_{1}, \ldots , f_{n} \in L^{1}([0, 1]; \C)$. Then there exists $h \in
  C^{\infty}([0, 1]; S^{1})$ with $\|h\|_{W^{1, 1}} \le 1 + 2\pi n$ such that for all~$j$,
  $$\int_{0}^{1}f_{j}(x)h(x)dx = 0.$$
\end{corollary}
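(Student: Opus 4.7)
The plan is straightforward: deduce Corollary~\ref{cor:main} directly from Theorem~\ref{main} by packaging the complex orthogonality conditions into a single real-valued odd continuous map. Concretely, I would define $\psi \colon C^{\infty}([0,1]; S^{1}) \to \mathbb{R}^{2n}$ by sending $h$ to the vector whose coordinates are
$$\Re \int_0^1 f_j(x) h(x)\,dx \qquad \text{and} \qquad \mathrm{Im} \int_0^1 f_j(x) h(x)\,dx$$
for $j = 1, \ldots, n$. Linearity of the integral in $h$ makes $\psi$ linear, so in particular $\psi(-h) = -\psi(h)$, and the zero set of $\psi$ coincides with the set of $h \in C^{\infty}([0,1]; S^{1})$ satisfying $\int_0^1 f_j(x) h(x)\,dx = 0$ for every $j$.

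The one point requiring care is the $L^{1}$-continuity of $\psi$. Here the pointwise constraint $|h(x)| = 1$ is essential: given any sequence $h_k \to h$ in $L^{1}$, an arbitrary subsequence admits a further subsequence converging pointwise almost everywhere, and since $|f_j(x) h_k(x)| \le |f_j(x)|$ with $|f_j| \in L^{1}$, dominated convergence yields $\int f_j h_k \to \int f_j h$ along that subsequence. As every subsequence admits such a further subsequence converging to the same limit, the original sequence converges, so $\psi$ is sequentially continuous and hence continuous in the metric topology. This is the informal observation recorded just after Theorem~\ref{main} (``$h_1, h_2$ are close in $L^1$ if they are uniformly close outside a set of small measure''), and it is presumably what Section~\ref{sec:topologies} formalizes.

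With $\psi$ odd, $L^{1}$-continuous, and valued in $\mathbb{R}^{2n}$, I would apply Theorem~\ref{main} with $n$ replaced by $2n$, obtaining $h \in C^{\infty}([0,1]; S^{1})$ with $\psi(h) = 0$ and $\|h\|_{W^{1,1}} \le 1 + 2\pi n$, which is exactly the content of the corollary. I do not anticipate any real obstacle: once Theorem~\ref{main} is granted, the corollary is a bookkeeping statement, and the only mildly subtle ingredient is the continuity verification sketched above.
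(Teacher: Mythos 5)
Your proposal is correct and matches the paper's approach: package the complex conditions into an odd $L^1$-continuous map $\psi$ into $\R^{2n}$ and invoke Theorem~\ref{main}. The only difference is in verifying $L^1$-continuity of $\psi$: you argue via pointwise a.e.\ convergence of a subsequence plus dominated convergence (using $|h|\equiv 1$ and $f_j\in L^1$ as dominating function), whereas the paper defines the finite, absolutely continuous measure $\mu_f(S)=\int_S|f_j|\,dx$ and observes $|\psi_j(h_2)-\psi_j(h_1)|\le\|h_2-h_1\|_{L^1(\mu_f)}$, relying on its Section~\ref{sec:topologies} machinery to compare the $L^1(\lambda)$ and $L^1(\mu_f)$ topologies; both verifications are sound, and yours is arguably the more self-contained route if one is not otherwise developing the measure-comparison results of Section~\ref{sec:topologies}.
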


Given a space $Z$ with a $\Z/2$-action $\sigma\colon Z \to Z$, the largest integer~$n$ such that the $n$-sphere $S^n$ with the antipodal $\Z/2$-action (i.e. $x \mapsto -x$) admits a continuous map $f\colon S^n \to Z$ with $f(-x) = \sigma(f(x))$ for all $x \in S^n$ is called the $\Z/2$-coindex of~$Z$, denoted~$\coind Z$. We show that the coindex of the space of smooth $S^1$-valued functions in the $L^1$-norm with $W^{1,1}$-norm at most $1 + \pi n$ is between $n$ and $2n-1$; see Theorem~\ref{thm:coindex}. Determining the coindex exactly remains an interesting open problem. Our proof proceeds by constructing $\Z/2$-maps from~$S^n$, i.e., commuting with the antipodal $\Z/2$-actions, via elementary obstruction theory, that is, inductively dimension by dimension. 

We find it illuminating to phrase our proof using the language of motion planning algorithms. A motion planning algorithm (mpa) for a space $Z$ is a continuous choice of connecting path for any two endpoints in~$Z$; see Section~\ref{sec:mpa} for details and Farber~\cite{farber2003} for an introduction. An mpa for $Z$ exists if and only if $Z$ is contractible. Here we introduce the notion of (full) lifted mpa, which does not imply contractibility but is sufficiently strong to establish lower bounds for the coindex of~$Z$; see Theorem~\ref{mpa}. We refer to Section~\ref{sec:mpa} for details. 

%

\section{Relationship between topologies on $C^{\infty}([0, 1]; S^{1})$}
\label{sec:topologies}

We now make precise our introductory comments about the topologies on
$C^{\infty}([0, 1]; S^{1})$ induced by the various $L^{p}$-norms and the $d_{0,
\infty}$ metric.

\begin{proposition}
  The $L^{p}$-norms for $1 \le p < \infty$ induce equivalent topologies on
  $C^{\infty}([0, 1]; S^{1})$.
\end{proposition}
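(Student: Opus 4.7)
The plan is to exploit two features of the setting: functions $h \in C^\infty([0,1]; S^1)$ are uniformly bounded in modulus by $1$, so differences are pointwise bounded by $2$; and $[0,1]$ has finite (indeed, unit) measure. Together these give two-sided interpolation estimates between any two $L^p$-norms.

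I would fix $1 \le p \le q < \infty$ and prove the two inclusions of topologies separately. For the easy direction, since $[0,1]$ carries a probability measure, Hölder's (or Jensen's) inequality yields $\|f\|_{L^p} \le \|f\|_{L^q}$ for every measurable $f\colon [0,1] \to \C$, so in particular $\|h_1 - h_2\|_{L^p} \le \|h_1 - h_2\|_{L^q}$, and $L^q$-convergence implies $L^p$-convergence.

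For the reverse direction, I would use the pointwise bound $|h_1(x) - h_2(x)| \le 2$, giving
$$|h_1(x) - h_2(x)|^q = |h_1(x) - h_2(x)|^{q-p} \cdot |h_1(x) - h_2(x)|^p \le 2^{q-p}|h_1(x) - h_2(x)|^p.$$
Integrating and taking $q$-th roots yields
$$\|h_1 - h_2\|_{L^q} \le 2^{(q-p)/q}\|h_1 - h_2\|_{L^p}^{p/q},$$
so $L^p$-convergence implies $L^q$-convergence. Combining the two estimates shows that the $L^p$- and $L^q$-topologies on $C^\infty([0,1];S^1)$ coincide, which yields the proposition.

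There is no real obstacle here; the only point to be careful about is that both inequalities rely on features specific to this setting (the probability measure on one side, the uniform bound $|h| = 1$ on the other), so the equivalence would fail for general smooth $\C$-valued functions or on domains of infinite measure.
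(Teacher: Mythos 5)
Your proof is correct and takes essentially the same approach as the paper's: the easy direction is Hölder/Jensen on the probability space $[0,1]$, and the reverse direction uses the uniform bound $|h_1 - h_2| \le \mathrm{diam}(S^1) = 2$ to derive the interpolation estimate $\|h_1 - h_2\|_{L^q} \le 2^{(q-p)/q}\|h_1 - h_2\|_{L^p}^{p/q}$. The paper writes $\mathrm{diam}(S^1)$ where you write $2$, but the argument is identical.
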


\begin{proof}
  For $1 \le p < \infty$, let $Z_{p}$ be $C^{\infty}([0, 1]; S^{1})$, equipped
  with the topology induced by the $L^{p}$-norm. Note that $\|h\|_{p} < \infty$
  for all $h \in C^{\infty}([0, 1]; S^{1})$, so the identity maps $1_{p,
  q}\colon Z_{p} \rightarrow Z_{q}$ are well-defined as functions. It suffices
  to show that $1_{p, q}$ is continuous for all $p, q \in [1, \infty)$.

  It is a standard fact that $1_{p, q}$ is continuous for $p \ge q$ when the
  domain has finite measure, as is the case here for $[0, 1]$. For $p < q$, we
  have
  \begin{align*}
    \|h_{2} - h_{1}\|_{q} &= \left(\int_{0}^{1}|h_{2}(x) -
    h_{1}(x)|^{q}dx\right)^{1/q}\\
    &\le \left(\int_{0}^{1}|h_{2}(x) - h_{1}(x)|^{p} \cdot
    (\text{diam}(S^{1}))^{q-p}dx\right)^{1/q}\\
    &\le (\text{diam}(S^{1}))^{(q-p)/q} \cdot \|h_{2} - h_{1}\|_{p}^{p/q}
  \end{align*}
  Since $S^{1}$ is bounded, $1_{p, q}$ is continuous. Hence the $Z_{p}$ are all
  homeomorphic.
\end{proof}

In the introduction, we claimed that ``the only relevant feature of the
$L^{1}$-norm is that functions $h_{1}, h_{2}$ are close in the $L^{1}$-norm if
$h_{1}, h_{2}$ are uniformly close outside of a set of small measure.'' To give
content to this statement, we define a metric $d_{0, \infty}$ on
$C^{\infty}([0, 1]; S^{1})$ by
\begin{align*}
  d_{0, \infty}(h_{1}, h_{2}) &= \inf\{\delta > 0 : |h_{2}(x) - h_{1}(x)| <
  \delta \text{ for all } x \in [0, 1] \setminus S,\\
  &\qquad\qquad \text{ for some } S \subseteq [0, 1] \text{ with } \mu(S) <
  \delta\}.
\end{align*}

\begin{proposition}
  The function $d_{0, \infty}$ is a metric.
\end{proposition}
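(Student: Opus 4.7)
My plan is to verify the four metric axioms in order, with symmetry and nonnegativity being essentially free, and triangle inequality being the nontrivial step. Symmetry is immediate from the symmetry in $h_1,h_2$ of the condition $|h_2(x)-h_1(x)|<\delta$ and the symmetric role of $S$. Nonnegativity is built into the definition since we infimize over $\delta>0$.

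For definiteness, one direction is trivial: if $h_1=h_2$, then for every $\delta>0$ we may take $S=\emptyset$, which witnesses $d_{0,\infty}(h_1,h_2)\le \delta$, hence $d_{0,\infty}(h_1,h_2)=0$. For the converse, suppose $d_{0,\infty}(h_1,h_2)=0$. Then for each $n$, pick $\delta_n<2^{-n}$ and a set $S_n\subseteq[0,1]$ with $\mu(S_n)<\delta_n<2^{-n}$ such that $|h_2(x)-h_1(x)|<\delta_n$ on $[0,1]\setminus S_n$. The set $T_N=\bigcup_{n\ge N}S_n$ has measure at most $2^{-N+1}$, so $T=\bigcap_N T_N$ has measure zero; and any $x\notin T$ lies outside $S_n$ for all sufficiently large $n$, which forces $h_1(x)=h_2(x)$. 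Since $h_1,h_2$ are continuous and agree on a dense set (the complement of a null set), they agree everywhere.

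For the triangle inequality $d_{0,\infty}(h_1,h_3)\le d_{0,\infty}(h_1,h_2)+d_{0,\infty}(h_2,h_3)$, set $a=d_{0,\infty}(h_1,h_2)$ and $b=d_{0,\infty}(h_2,h_3)$, fix $\varepsilon>0$, and pick $\delta_1<a+\varepsilon/2$ and $\delta_2<b+\varepsilon/2$ together with witnessing sets $S_1,S_2$, so that $|h_2-h_1|<\delta_1$ off $S_1$ and $|h_3-h_2|<\delta_2$ off $S_2$, with $\mu(S_i)<\delta_i$. Off $S_1\cup S_2$ the ordinary triangle inequality in $\C$ gives $|h_3(x)-h_1(x)|<\delta_1+\delta_2$, and $\mu(S_1\cup S_2)\le\mu(S_1)+\mu(S_2)<\delta_1+\delta_2$, so $\delta:=\delta_1+\delta_2$ is an admissible witness and thus $d_{0,\infty}(h_1,h_3)\le\delta_1+\delta_2<a+b+\varepsilon$. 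Letting $\varepsilon\to 0$ gives the inequality.

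The only step requiring any care is the triangle inequality, and the key observation making it go through smoothly is that the same parameter $\delta$ controls both the pointwise error and the measure of the bad set, so pointwise errors and measures of bad sets add consistently under composition. None of the arguments use any property of $S^1$ beyond being a metric subspace of $\C$, so the same proof shows $d_{0,\infty}$ is a metric on the space of measurable functions with any bounded target.
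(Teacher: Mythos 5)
Your proof is correct and follows essentially the same approach as the paper's: you combine witnessing sets and bounds for the triangle inequality exactly as in the text, and you spell out the paper's terse appeal to continuity for the identity axiom (showing agreement off a null set and then invoking continuity). One small caveat: your closing remark that the argument would work for arbitrary measurable functions is inconsistent with your own proof, since you use continuity of $h_1, h_2$ to upgrade almost-everywhere agreement to everywhere agreement; without continuity, $d_{0,\infty}$ is only a pseudometric.
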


\begin{proof}
  By the continuity of maps in $C^{\infty}([0, 1]; S^{1})$, we have $d_{0,
  \infty}(h_{1}, h_{2}) = 0$ iff $h_{1} = h_{2}$. For the triangle inequality,
  suppose:
  \begin{itemize}
    \item $|h_{2}(x) - h_{1}(x)| < \delta_{1}$ for all $x \in [0, 1]\setminus
  S_{1}$, where $\mu(S_{1}) < \delta_{1}$.
    \item $|h_{3}(x) - h_{2}(x)| < \delta_{2}$ for all $x \in [0, 1]\setminus
    S_{2}$, where $\mu(S_{2}) < \delta_{2}$
  \end{itemize}
  Then $|h_{3}(x) - h_{1}(x)| < \delta_{1} + \delta_{2}$ for all $x \in [0,
  1]\setminus (S_{1} \cup S_{2})$, and $\mu(S_{1}\cup S_{2}) < \delta_{1} +
  \delta_{2}$. Hence $d_{0, \infty}(h_{1}, h_{3}) \le \delta_{1} + \delta_{2}$.
  Taking the infimum over $\delta_{1}, \delta_{2}$, we obtain $d_{0,
  \infty}(h_{1}, h_{3}) \le d_{0, \infty}(h_{1}, h_{2}) + d_{0, \infty}(h_{2},
  h_{3})$.
\end{proof}

\begin{proposition}\label{equivalent}
  The metric $d_{0, \infty}$ and the norm $\|\cdot\|_{1}$ induce equivalent topologies on
  $C^{\infty}([0, 1]; S^{1})$.
\end{proposition}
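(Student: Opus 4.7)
The plan is to verify that the identity map $C^{\infty}([0,1]; S^1) \to C^{\infty}([0,1]; S^1)$ is continuous in both directions between the two metrics, which is enough since equivalence of topologies only requires continuity of the identity, not bi-Lipschitz equivalence. The key tool in both directions is to split the interval $[0,1]$ into a small-measure "bad" set where $|h_2 - h_1|$ may be as large as $\mathrm{diam}(S^1) = 2$, and a "good" complement where $|h_2 - h_1|$ is uniformly small.

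For the direction $d_{0,\infty} \Rightarrow L^1$: given $\varepsilon > 0$, set $\delta = \varepsilon/3$. If $d_{0,\infty}(h_1,h_2) < \delta$, pick $S \subseteq [0,1]$ with $\mu(S) < \delta$ and $|h_2(x)-h_1(x)| < \delta$ off $S$. Then
\[
\|h_2 - h_1\|_1 \le \int_S |h_2 - h_1|\,dx + \int_{[0,1]\setminus S} |h_2 - h_1|\,dx \le 2\mu(S) + \delta \le 3\delta = \varepsilon.
\]

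For the direction $L^1 \Rightarrow d_{0,\infty}$: given $\varepsilon > 0$, set $\delta = \varepsilon^2$. If $\|h_2 - h_1\|_1 < \delta$, apply Markov's inequality to the set $S = \{x \in [0,1] : |h_2(x) - h_1(x)| \ge \varepsilon\}$ to obtain
\[
\mu(S) \le \frac{\|h_2 - h_1\|_1}{\varepsilon} < \frac{\delta}{\varepsilon} = \varepsilon.
\]
Thus on $[0,1]\setminus S$ we have $|h_2 - h_1| < \varepsilon$ while $\mu(S) < \varepsilon$, so $d_{0,\infty}(h_1,h_2) \le \varepsilon$.

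Neither direction presents a serious obstacle; the only mild subtlety is remembering that the "good" bound and the measure bound in the definition of $d_{0,\infty}$ are controlled by the same parameter $\delta$, which forces the square-root loss in the second direction via Markov's inequality. This asymmetry is why the two metrics are topologically but not uniformly equivalent.
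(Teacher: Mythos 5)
Your proof is correct and follows essentially the same approach as the paper: split $[0,1]$ into the small bad set $S$ and its complement in one direction, and use a $\delta=\varepsilon^2$ trade-off in the other (the paper phrases the second direction as a contrapositive, you phrase it directly via Markov's inequality, which is a touch cleaner). One small correction to your closing remark: the square-root loss rules out Lipschitz equivalence, not uniform equivalence—both of your estimates are uniform in $h_1, h_2$, so the two metrics are in fact uniformly equivalent.
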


\begin{proof}
  Let $Z_{0, \infty}$ be $C^{\infty}([0, 1]; S^{1})$, equipped with the
  topology induced by $d_{0, \infty}$; it suffices to show that the identity
  maps between  $Z_{0, \infty}, Z_{1}$ are continuous.

  For the identity map $1\colon Z_{0, \infty} \rightarrow Z_{1}$, suppose
  $d_{0, \infty}(h_{1}, h_{2}) < \delta$, so that there exists $S \subseteq [0,
  1]$ with $\mu(S) < \delta$ such that $|h_{2}(x) - h_{1}(x)| < \delta$ on $[0,
  1] \setminus S$. Then
  $$\int_{0}^{1}|h_{2}(x) - h_{1}(x)|dx \le \int_{S}\text{diam}(S^{1})dx +
  \int_{[0, 1] \setminus S}\delta dx \le \delta (\text{diam}(S^{1}) + 1).$$
  This shows that $1\colon Z_{0, \infty} \rightarrow Z_{1}$ is continuous.
  
  For the identity map $1\colon Z_{1} \rightarrow Z_{0, \infty}$, let
  $\varepsilon > 0$ and suppose $\|h_{2} - h_{1}\|_{1} < \delta$ for $\delta =
  \varepsilon^{2}$. If $d_{0, \infty}(h_{1}, h_{2}) \ge \varepsilon$, then
  $|h_{2}(x) - h_{1}(x)| \ge \varepsilon$ on a set $S$ with $\mu(S) \ge
  \varepsilon$, implying $\|h_{2} - h_{1}\|_{1} \ge \varepsilon^{2}$, a
  contradiction. Hence $d_{0, \infty}(h_{1}, h_{2}) < \varepsilon$, and
  $1\colon Z_{1} \rightarrow Z_{0, \infty}$ is continuous.
\end{proof}

Now we expand our view to consider $L^{p}$ spaces under other measures $\mu$.
We show that finite, absolutely continuous measures can only produce coarser
topologies than Lebesgue measure:

\begin{proposition}
  Let $\mu$ be a finite measure on $[0, 1]$ that is absolutely continuous with
  respect to Lebesgue measure. Let $Z_{1}$ be $C^{\infty}([0, 1]; S^{1})$,
  equipped with the topology induced by the $L_{1}$-norm with respect to
  Lebesgue measure, and let $Z_{1, \mu}$ be $C^{\infty}([0, 1]; S^{1})$,
  equipped with the topology induced by the $L_{1}$-norm with respect to $\mu$.
  Then the identity function $1\colon Z_{1} \rightarrow Z_{1, \mu}$ is
  continuous.
\end{proposition}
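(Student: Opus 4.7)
The plan is to reduce to the equivalence of $\|\cdot\|_{1}$ and $d_{0, \infty}$ established in Proposition~\ref{equivalent}, and then exploit the standard $\varepsilon$-$\delta$ characterization of absolute continuity. By that proposition, the identity map $Z_{1} \to Z_{0, \infty}$ is continuous, so it suffices to show continuity of the identity map $Z_{0, \infty} \to Z_{1, \mu}$; continuity of the composition $Z_{1} \to Z_{0, \infty} \to Z_{1, \mu}$ then gives the claim.

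The key ingredient I would invoke is the standard fact that if $\mu$ is a finite measure absolutely continuous with respect to Lebesgue measure $\lambda$, then for every $\varepsilon > 0$ there exists $\delta_{0} > 0$ such that $\lambda(S) < \delta_{0}$ implies $\mu(S) < \varepsilon$. This follows either from the Radon--Nikodym theorem applied to $\mu$ together with the absolute continuity of the Lebesgue integral of an $L^{1}$ density, or from a direct Borel--Cantelli argument on a sequence of bad sets.

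Granted this, the estimate parallels the proof of Proposition~\ref{equivalent}. Fix $\varepsilon > 0$, choose $\delta_{0}$ as above so that $\lambda(S) < \delta_{0}$ forces $\mu(S) < \varepsilon / (2\,\text{diam}(S^{1}))$, and set $\delta = \min(\delta_{0},\ \varepsilon / (2 \mu([0, 1])))$ (assuming $\mu([0,1]) > 0$; otherwise the claim is trivial). If $d_{0, \infty}(h_{1}, h_{2}) < \delta$, there is a set $S \subseteq [0, 1]$ with $\lambda(S) < \delta \le \delta_{0}$ and $|h_{2}(x) - h_{1}(x)| < \delta$ for $x \in [0, 1] \setminus S$. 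Splitting the integral yields
\[
  \int_{0}^{1}|h_{2}(x) - h_{1}(x)|\, d\mu(x) \le \text{diam}(S^{1}) \cdot \mu(S) + \delta \cdot \mu([0, 1]) < \varepsilon,
\]
which is exactly the required continuity estimate.

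I do not foresee a genuine obstacle here beyond recalling the $\varepsilon$-$\delta$ reformulation of absolute continuity; once that is in hand, the proof is the same two-piece bound used in Proposition~\ref{equivalent}, with $\mu$ replacing Lebesgue measure on the small-measure piece. The use of $d_{0, \infty}$ as an intermediate topology is the conceptual step that makes the argument frictionless, by isolating a priori the ``uniformly close off a small set'' behavior that the hypothesis on $\mu$ is tailored to handle.
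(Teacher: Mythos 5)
Your proposal is correct and follows essentially the same route as the paper: both reduce via Proposition~\ref{equivalent} to the continuity of the identity $Z_{0,\infty} \to Z_{1,\mu}$ and then split the integral into the small-measure exceptional set (controlled by absolute continuity) and the complement (controlled by the uniform bound $\delta$). The only difference is that you make the $\varepsilon$-$\delta$ reformulation of absolute continuity and the resulting quantifier bookkeeping explicit, whereas the paper states the same estimate and concludes qualitatively that the bound tends to $0$ as $\delta \to 0$.
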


\begin{proof}
  By Proposition~\ref{equivalent}, it suffices to show that $1\colon Z_{0, \infty}
  \rightarrow Z_{1, \mu}$ is continuous. The argument is similar to the
  argument that $1\colon Z_{0, \infty} \rightarrow Z_{1}$ is continuous. Using
  $\lambda$ to denote Lebesgue measure, suppose $d_{0, \infty}(h_{1}, h_{2}) <
  \delta$, so that there exists $S \subseteq [0, 1]$ with $\lambda(S) < \delta$
  such that $|h_{2}(x) - h_{1}(x)| < \delta$ on $[0, 1] \setminus S$. Then
  \begin{align*}
    \int_{[0, 1]}|h_{2}(x) - h_{1}(x)|d\mu &\le \int_{S}\text{diam}(S^{1})d\mu
    + \int_{[0, 1] \setminus S}\delta d\mu\\
    &\le \text{diam}(S^{1})\mu(S) + \delta\mu([0, 1])
  \end{align*}
  Note that since $\mu$ is finite, we have $\mu([0, 1]) < \infty$. As $\delta
  \rightarrow 0$, we have $\lambda(S) \rightarrow 0$, so $\mu(S) \rightarrow 0$
  by absolute continuity, hence the right side approaches 0. This shows the
  desired continuity.
\end{proof}

The relationships between the topologies on $C^{\infty}([0, 1]; S^{1})$ can be
summarized as follows, where $1 < p_{1} < p_{2} < \infty$ and $\mu$ is a finite
measure on $[0, 1]$ which is absolutely continuous with respect to Lebesgue
measure:
\begin{center}
  \begin{tikzcd}
    Z_{\infty} \arrow[hook]{r}{\not\cong} & Z_{p_{2}}
    \arrow[leftrightarrow]{r}{\cong} \arrow[hook]{d} & Z_{p_{1}}
    \arrow[leftrightarrow]{r}{\cong} \arrow[hook]{d} & Z_{1}
    \arrow[leftrightarrow]{r}{\cong} \arrow[hook]{d} &
    Z_{0, \infty} \arrow[hook]{ld}\\
    & Z_{p_{2}, \mu} \arrow[leftrightarrow]{r}{\cong} & Z_{p_{1}, \mu}
    \arrow[leftrightarrow]{r}{\cong} & Z_{1, \mu}
  \end{tikzcd}
\end{center}

Therefore, when establishing the continuity of $\psi$ for the sake of applying
Theorem \ref{main}, we may use any $L^{p}$ norm on $C^{\infty}([0, 1]; S^{1})$,
with respect to any finite measure $\mu$ on $[0, 1]$ which is absolutely
continuous with respect to Lebesgue measure. (If we use a measure $\mu$ other
than Lebesgue measure, we can precompose $\psi$ with $1\colon Z_{1} \rightarrow
Z_{1, \mu}$ before applying Theorem \ref{main}.)

With these results in hand, we can now deduce Corollary~\ref{cor:main} from Theorem~\ref{main}:

\begin{proof}[Proof of Corollary~\ref{cor:main}]
  Let $\psi\colon C^{\infty}([0, 1]; S^{1}) \rightarrow \C^{n}$ be given by
  component maps $$\psi_{j}\colon h \mapsto \int_{0}^{1}f_{j}(x)h(x)dx.$$ We
  claim $\psi_{j}$ is continuous. Since $f_{j} \in L^{1}([0, 1]; \C)$, $f_{j}$
  induces a finite measure $\mu_{f}$ which is absolutely continuous with
  respect to Lebesgue measure, given by
  $$\mu_{f}(S) = \int_{0}^{1}|f_{j}(x)|dx.$$
  By the above, we may view $C^{\infty}([0, 1]; S^{1})$ as having the topology
  induced by the $L^{1}$-norm $\|\cdot\|_{1}$ with respect to $\mu_{f}$. Then
  \begin{align*}
    |\psi_{j}(h_{2}) - \psi_{j}(h_{1})| &\le \int_{0}^{1}|f_{j}(x)|\cdot
    |h_{2}(x) - h_{1}(x)|dx\\
    &\le \int_{[0, 1]}|h_{2} - h_{1}|d\mu_{f}\\
    &\le \|h_{2} - h_{1}\|_{1}.
  \end{align*}
  Therefore, $\psi_{j}$ is continuous, so $\psi$ is continuous. Viewing the
  codomain $\C^{n}$ of $\psi$ as $\R^{2n}$, we may apply Theorem \ref{main} and
  get $\|h\|_{W^{1, 1}} \le 1 + 2 \pi n$.
\end{proof}

\section{Lifts of motion planning algorithms and the coindex}
\label{sec:mpa}

Our proof of Theorem \ref{main} makes use of \emph{motion planning algorithms}; see Farber~\cite{farber2003}.
We use $Y, Z$ in the following definitions to match our notation later:

\begin{definition}
  Let $Z$ be a topological space, and let $PZ$ be the space of continuous paths
  $\gamma\colon [0, 1] \rightarrow Z$, equipped with the compact-open topology.
  Then a \textbf{motion planning algorithm} (or \textbf{mpa}) is a continuous
  map $s\colon Z \times Z \rightarrow PZ$, such that $s(z_{0}, z_{1})(0) =
  z_{0}$ and $s(z_{0}, z_{1})(1) = z_{1}$.
\end{definition}

For $Z$ a locally compact Hausdorff space, using the compact-open topology for $PZ$ ensures that a function $s\colon Z
\times Z \rightarrow PZ$ is continuous if and only if its uncurried form $\widetilde{s}\colon
Z \times Z \times [0, 1] \rightarrow Z$ given by $(z_{0}, z_{1}, t) \mapsto
s(z_{0}, z_{1})(t)$ is continuous; see Munkres~\cite[Thm.~46.11]{munkres2014}. One basic fact is that an
mpa for $Z$ exists if and only if $Z$ is contractible~\cite{farber2003}.

We weaken the definition above for our purposes:

\begin{definition}
  Let $Y, Z$ be topological spaces, and let $\phi\colon Y \rightarrow Z$ be
  continuous. Let $(\preceq)$ be a preorder on $Y$, and let $Y^{2}_{\preceq} =
  \{(y_{0}, y_{1}) \in Y^{2} : y_{0} \preceq y_{1}\}$, giving $Y^{2}$ the
  product topology and $Y^{2}_{\preceq}$ the resulting subspace topology.
  
  A \textbf{lifted motion planning algorithm} (or \textbf{lifted mpa}) for $(Y,
  Z, \phi, \preceq)$ is a family of maps $s_{w}\colon Y^{2}_{\preceq}
  \rightarrow PY$ for $w \in (0, 1]$ with $s_{w}(y_{0}, y_{1})(0) = y_{0}$ and
  $s_{w}(y_{0}, y_{1})(1) = y_{1}$, assembling into a continuous map $s\colon
  (0, 1] \times Y^{2}_{\preceq} \rightarrow PY$, with the following continuity
  property:
  \begin{align*}
    &\text{For all $y \in Y$ and all neighborhoods $V$ of $\phi(y) \in Z$,}\\
    &\qquad\text{there exists a neighborhood $U$ of $\phi(y) \in Z$ and $\delta
    > 0$ such that:}\\
    &\qquad\qquad\text{if}\quad \phi(y_{0}), \phi(y_{1}) \in U, \quad w <
    \delta,\\
    &\qquad\qquad\text{then}\quad \phi(s_{w}(y_{0}, y_{1})(t)) \in V \text{ for
    all } t \in [0, 1].
  \end{align*}
\end{definition}

\begin{definition}
  A lifted mpa $s\colon (0, 1] \times Y^{2}_{\preceq} \rightarrow PY$ 
  for $(Y, Z, \phi, \preceq)$ is \textbf{full} if $y_{0}
  \preceq y_{1}$ for all $y_{0}, y_{1} \in Y$. In this case we say $s$ is a
  full lifted mpa for $(Y, Z, \phi)$, omitting $(\preceq)$.
\end{definition}

The continuity property essentially says that if two points $y_{1}, y_{2} \in
Y$ have images in $Z$ close to $\phi(y) \in Z$, then $s_{w}$ carries $(y_{0},
y_{1})$ to a path whose image under $\phi$ is a path that stays close to~$\phi(y)$, 
provided $w$ is small.

Note that an mpa $s\colon Z \times Z \rightarrow PZ$ satisfying $s(z, z) =
c_{z}$ for all $z \in Z$ extends to a full lifted mpa for $(Z, Z, 1_{Z})$ by
taking $s_{w} = s$ for all $w$; the continuity property just restates the
continuity of $s$ at diagonal points $(z, z) \in Z \times Z$.

This relaxed notion of mpa still provides lower bounds for the (equivariant)
topology of~$Z$ that are weaker than contractibility. Recall that for a topological
space $Z$ with $\Z/2$-action generated by $\sigma \colon Z \to Z$ the 
$\Z/2$-coindex of~$Z$ denoted by $\coind Z$ is the largest integer $n$ such that
there is a $\Z/2$-map $f\colon S^n \to Z$, that is, a map satisfying $f(-x) = \sigma(f(x))$.

\begin{definition}
  Let $x \in S^{k}$, and let $x = (x_{1}, \ldots , x_{k+1})$. We say that $x$
  is \textbf{positive} if its last nonzero coordinate is positive, and
  \textbf{negative} otherwise.
\end{definition}

Our main tool in proving Theorem \ref{main} will be the following theorem: 

\begin{theorem}\label{mpa}
  Let $Y, Z$ be topological spaces, equip $Y$ with a $\Z$-action generated by
  $\rho\colon Y \rightarrow Y$, and equip $Z$ with a $\Z/2$-action generated
  by $\sigma\colon Z \rightarrow Z$. Let $\phi\colon Y \rightarrow Z$ be
  continuous  
  and equivariant, i.e., $\sigma\circ\phi =  \phi\circ\rho$.
  Let $(\preceq)$ be a preorder on $Y$ and $s\colon
  (0, 1] \times Y^{2}_{\preceq} \rightarrow PY$ a lifted mpa for $(Y, Z, \phi,
  \preceq)$ such that:
  \begin{enumerate}[(1)]
    \item $y \preceq \rho(y)$.
    \item $\rho(y_{0}) \preceq \rho(y_{1})$ if and only if $y_{0} \preceq y_{1}$.
    \item $y_{0} \preceq y_{1}$ implies $y_{0} \preceq s_{w}(y_{0},
    y_{1})(t) \preceq y_{1}$, for all $w \in (0, 1]$, $t \in [0, 1]$.
  \end{enumerate}
  Then for each integer $n\ge 0$, there exists a $\Z/{2}$-map $\beta_{n}\colon
  S^{n} \rightarrow Z$. 
  Moreover, for any choice of
  initial point $y^{*} \in Y$, the maps $\beta_{n}$ can be chosen such that
  $\beta_{n}$ maps each positive point of $S^{n}$ to a point in $Z$ of the form
  $\phi(y)$, with $y^{*} \preceq y \preceq \rho^{n}(y^{*})$, that is, the subspace of these points $\phi(y)$ and their antipodes $\sigma(\phi(y))$ in~$Z$ has coindex at least~$n$.
\end{theorem}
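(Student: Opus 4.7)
The plan is to prove the theorem by induction on $n \ge 0$, extending $\beta_{n-1}$ to $\beta_n$ by using the lifted mpa to null-homotope over the upper hemisphere and then extending equivariantly. For the base case $n = 0$, set $\beta_0(+1) = \phi(y^*)$ and $\beta_0(-1) = \sigma(\phi(y^*)) = \phi(\rho(y^*))$; equivariance is automatic and the lone positive point $+1$ has lift $y^* \in [y^*, \rho^0(y^*)]$.

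For the inductive step, decompose $S^n$ into hemispheres $H_\pm = \{x \in S^n : \pm x_{n+1} \ge 0\}$ glued along the equator $S^{n-1}$. The antipodal map swaps $H_\pm$, and $\beta_{n-1}$ on the equator is already $\Z/2$-equivariant, so it suffices to define $\beta_n$ on $H_+$ extending $\beta_{n-1}$ on the equator and then set $\beta_n(-x) = \sigma(\beta_n(x))$ on $H_-$. For each $v \in S^{n-1}$ I choose a lift $y'_v \in Y$ with $\phi(y'_v) = \beta_{n-1}(v)$ and $y^* \preceq y'_v \preceq \rho^n(y^*)$: for positive $v$ the induction hypothesis supplies $y'_v \in [y^*, \rho^{n-1}(y^*)]$; for negative $v$ set $y'_v = \rho(y'_{-v})$, so that $\phi(y'_v) = \sigma(\beta_{n-1}(-v)) = \beta_{n-1}(v)$ by equivariance of $\phi$ and $\beta_{n-1}$, and $y'_v \in [\rho(y^*), \rho^n(y^*)]$ by properties (1) and (2). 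Parametrizing $H_+$ as the cone $(S^{n-1} \times [0, 1])/(S^{n-1} \times \{1\})$, with $t = 0$ the equator and $t = 1$ the pole, I define
$$\beta_n((v, t)) = \phi\bigl(s_{w(v, t)}(y'_v, \rho^n(y^*))(t)\bigr)$$
for $t \in [0, 1)$ and $\beta_n(\mathrm{pole}) = \phi(\rho^n(y^*))$, with $w(v, t) > 0$ tending to $0$ as $t \to 1$. At $t = 0$ this recovers $\beta_{n-1}(v)$, and by property (3) the path stays in $\phi([y'_v, \rho^n(y^*)]) \subseteq \phi([y^*, \rho^n(y^*)])$, yielding the preorder bound for positive points.

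The main obstacle is establishing continuity of $\beta_n$: the lifts $v \mapsto y'_v$ need not vary continuously in $v$ (only $v \mapsto \phi(y'_v) = \beta_{n-1}(v)$ does), so continuity in $v$ cannot be read off directly from continuity of $s$ in its endpoint arguments. This is precisely the role of the continuity property of a lifted mpa: around each $v_0 \in S^{n-1}$, continuity of $\beta_{n-1}$ places $\phi(y'_v)$ in a prescribed neighborhood of $\phi(y'_{v_0})$, and for $w$ sufficiently small the continuity property at $y = y'_{v_0}$ confines the $\phi$-image of the mpa path to a prescribed neighborhood of $\phi(y'_{v_0})$; at the pole, the continuity property applied at $y = \rho^n(y^*)$ combined with $w(v, t) \to 0$ handles the analogous pole limit. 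Concretely, I expect to implement this by choosing $w(v, t)$ adapted to a finite open cover of $S^{n-1}$ along which the lifts behave coherently, and by concatenating mpa paths in stages—first a short local step moving $y'_v$ to a canonical nearby lift within a small $Z$-neighborhood, then a fixed global step to $\rho^n(y^*)$—so that continuity holds everywhere while property (3) keeps all intermediate lifts in $[y^*, \rho^n(y^*)]$.
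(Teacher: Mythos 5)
Your base case, equivariance bookkeeping, and preorder-bound analysis are fine, but the core continuity argument has a genuine gap and differs structurally from what the paper does. You cone the upper hemisphere to the single point $\phi(\rho^n(y^*))$, sending $(v,t)$ to $\phi\bigl(s_{w(v,t)}(y'_v,\rho^n(y^*))(t)\bigr)$. The lifted mpa continuity property is \emph{local in $Z$}: it controls $\phi(s_w(y_0,y_1)(t))$ only when $\phi(y_0)$ \emph{and} $\phi(y_1)$ both lie in a small neighborhood $U$ of some $\phi(y)$. Near the equator ($t\approx 0$) this is usable because $\phi(y'_v)=\beta_{n-1}(v)$ varies continuously. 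But near the pole ($t\approx 1$) you would need to apply the property at $y=\rho^n(y^*)$ with $y_0=y'_v$, and $\phi(y'_v)=\beta_{n-1}(v)$ is in general nowhere near $\phi(\rho^n(y^*))$, so the hypothesis $\phi(y_0)\in U$ fails and the property gives you nothing. Sending $w\to 0$ does not rescue this. More fundamentally, because you discard the lifts and re-choose $y'_v$ at every stage, you have no continuity of $v\mapsto y'_v$ anywhere, so the interior of the hemisphere is not continuous either; the lifted mpa property was designed to patch a single codimension-one locus of discontinuity, not an everywhere-dense one. Your proposed fix --- a local step to a ``canonical nearby lift'' followed by a fixed global step, glued over a finite cover --- runs into the problem that $\preceq$ is only a preorder, so $y'_v$ and a ``canonical'' $y'_{v_0}$ need not be comparable and $s_w$ need not be defined between them, and you give no mechanism for gluing the charts.

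The paper avoids all of this by carrying the lift $\alpha_k\colon S^k\to Y$ (not just $\beta_k$) through the induction, with the invariant that $\alpha_k$ is \emph{continuous on the open upper hemisphere} and satisfies $\alpha_k(-x)=\rho(\alpha_k(x))$, so discontinuity is confined to the equator while $\phi\circ\alpha_k$ stays continuous. The ball $B^{k+1}$ is then filled by interpolating between the two \emph{hemisphere projections}, $\widetilde{\alpha}_k(x)=s_{w(x)}(\alpha_k(u(x)),\alpha_k(l(x)))(t(x))$, rather than by coning to a fixed point. This keeps both endpoints of each mpa path varying continuously in the interior (so interior continuity is automatic), and as $x$ approaches the equator the two endpoints $\phi(\alpha_k(u(x))),\phi(\alpha_k(l(x)))$ both converge to $\phi(\alpha_k(x))$, which is exactly the situation the lifted mpa continuity property is designed for, with $w(x)\to 0$ forced by the choice $w(x)=\min(d(x,E),t(x),1-t(x))$. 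To repair your argument you would essentially need to re-introduce that invariant, at which point you recover the paper's construction.
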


We will apply Theorem \ref{mpa} by taking $Z$ to be $C^{\infty}([0, 1]; S^{1})$
with the topology induced by the $L^{1}$-norm, and $Y$ to be $C^{\infty}([0,
1]; \R)$ with the $L^{1}$-norm, restricted to increasing functions. Using
lifted mpa's allows us to reason about paths in $Y$, which are simpler than
paths in $Z$. The theorem encapsulates the inductive construction of a function
$\alpha_{n}\colon S^{n} \rightarrow Y$, from which we produce ${\beta_{n}\colon
S^{n} \rightarrow Z}$; the continuity property of a lifted mpa is needed for
this construction to work. The last part of the theorem will give us the $W^{1,
1}$-norm bound.

\begin{proof}[Proof of Theorem~\ref{mpa}]

  We will inductively construct a function $\alpha_{n}\colon S^{n} \rightarrow
  Y$ and then take $\beta_{n} = \phi \circ \alpha_{n}$. We will allow
  $\alpha_{n}$ to be discontinuous on the equator of $S^{n}$, but in such a way
  that $\phi \circ \alpha_{n}$ is continuous everywhere.
  
  Specifically, let $\alpha_{k}\colon S^{k} \rightarrow Y$ be a function, not
  necessarily continuous. Let $m \colon S^{k} \rightarrow S^{k}$ be given by
  $(x_{1}, \ldots , x_{k}, x_{k+1}) \mapsto (x_{1}, \ldots , x_{k}, -x_{k+1})$,
  so that $m$ mirrors points across the plane perpendicular to the last
  coordinate axis. Then we say that $\alpha_{k}$ is \emph{good} if
  \begin{enumerate}[($\alpha$-1)]
    \item For $x$ positive, $y^{*} \preceq \alpha_{k}(x) \preceq
    \rho^{k}(y^{*})$, and $\alpha_{k}(-x) = \rho(\alpha_{k}(x))$.
    \item For $x$ in the open upper hemisphere, $\alpha_{k}(x) \preceq
    \alpha_{k}(m(x))$.
    \item $\alpha_{k}$ is continuous on the open upper hemisphere.
    \item $\phi \circ \alpha_{k}$ is continuous.
  \end{enumerate}

  Let $u, l \colon B^{k+1} \rightarrow S^{k}$ be the projections to the closed
  upper and lower hemispheres, that is, $u(x)$ is the unique point in the
  closed upper hemisphere sharing its first $k$ coordinates with $x$, and
  similarly for $l(x)$ for the lower hemisphere. Then we have the following
  claim:
  
  \begin{claim}
    If $\alpha_{k}\colon S^{k} \rightarrow Y$ is good, then $\alpha_{k}$
    extends to $\widetilde{\alpha}_{k} \colon B^{k+1} \rightarrow Y$, such
    that:
    \begin{enumerate}[($\widetilde{\alpha}$-1)]
      \item For all $x \in B^{k+1}$, we have $y^{*} \preceq
      \widetilde{\alpha}_{k}(x)
      \preceq \rho^{k+1}(y^{*})$.
      \item For all $x \in B^{k+1}$, we have $\alpha_{k}(u(x)) \preceq
      \widetilde{\alpha}_{k}(x) \preceq \alpha_{k}(l(x))$.
      \item $\widetilde{\alpha}_{k}$ is continuous in the interior of
      $B^{k+1}$.
      \item $\phi \circ \widetilde{\alpha}_{k}$ is continuous.
    \end{enumerate}
  \end{claim}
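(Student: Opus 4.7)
The plan is to interpolate between the boundary values $\alpha_k(u(x))$ and $\alpha_k(l(x))$ along each vertical chord of $B^{k+1}$ using the lifted mpa, with the mpa parameter $w$ arranged to vanish at the equator of $S^k$ --- precisely where $\alpha_k$ may be discontinuous. Writing $x=(x',x_{k+1})\in \R^k\times\R$, I set $w(x) = \sqrt{1-|x'|^2}$ and take $t(x)\in[0,1]$ to be the affine parameter on the vertical chord from $u(x)$ to $l(x)$, so that $t(x)=0$ when $x_{k+1}=\sqrt{1-|x'|^2}$ and $t(x)=1$ when $x_{k+1}=-\sqrt{1-|x'|^2}$. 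Then I would define
$$\widetilde{\alpha}_k(x) = s_{w(x)}\bigl(\alpha_k(u(x)),\,\alpha_k(l(x))\bigr)(t(x))$$
for $|x'|<1$, and $\widetilde{\alpha}_k(x) = \alpha_k(x)$ on the equator $|x'|=1$. Applying ($\alpha$-2) to $u(x)$, which lies in the open upper hemisphere whenever $|x'|<1$, gives $\alpha_k(u(x)) \preceq \alpha_k(l(x))$, so the lifted mpa may be applied; the boundary values $t(x)\in\{0,1\}$ on $\partial B^{k+1}$ off the equator then confirm that $\widetilde{\alpha}_k$ genuinely extends $\alpha_k$.

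Properties ($\widetilde{\alpha}$-1) and ($\widetilde{\alpha}$-2) should be nearly immediate: hypothesis (3) of Theorem~\ref{mpa} delivers the sandwich $\alpha_k(u(x)) \preceq \widetilde{\alpha}_k(x) \preceq \alpha_k(l(x))$ directly, and combining this with ($\alpha$-1) together with hypotheses (1) and (2) --- used to pass $\rho$ across $\preceq$ when translating between positive and negative points of $S^k$ via the equivariance $\alpha_k(-x) = \rho(\alpha_k(x))$ --- yields $y^* \preceq \widetilde{\alpha}_k(x) \preceq \rho^{k+1}(y^*)$. Continuity in the interior (property ($\widetilde{\alpha}$-3)) is then straightforward from joint continuity of the uncurried lifted mpa $\widetilde{s}$ on $(0,1]\times Y^2_{\preceq}\times[0,1]$ together with continuity of $u,l,w,t$ and of $\alpha_k$ restricted to the two open hemispheres of $S^k$.

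The main obstacle is property ($\widetilde{\alpha}$-4): continuity of $\phi\circ\widetilde{\alpha}_k$ at an equator point $x_0$, where the formula degenerates ($w(x_0)=0$) and $\alpha_k$ itself may be discontinuous. This is exactly the scenario the continuity property in the definition of a lifted mpa is designed to handle. Given a neighborhood $V$ of $\phi(\alpha_k(x_0))$, I would apply that property with $y=\alpha_k(x_0)$ to obtain a neighborhood $U$ of $\phi(y)$ and $\delta>0$; by ($\alpha$-4), $\phi\circ\alpha_k$ is continuous on $S^k$, so for all $x$ sufficiently close to $x_0$ the images $\phi(\alpha_k(u(x)))$ and $\phi(\alpha_k(l(x)))$ both lie in $U$, and since $|x'|\to 1$ forces $w(x)\to 0$ I can simultaneously arrange $w(x)<\delta$. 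The continuity property then confines the entire path image under $\phi$ to $V$, so $\phi(\widetilde{\alpha}_k(x))\in V$. Continuity of $\phi\circ\widetilde{\alpha}_k$ at non-equator boundary points (and in the interior) is routine, since there $w$ stays bounded away from $0$ and $\alpha_k$ is continuous on whichever open hemisphere contains the relevant projection.
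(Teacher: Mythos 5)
Your proof is correct and follows the paper's approach: interpolate along vertical chords via the lifted mpa, forcing the parameter $w$ to vanish precisely where $\alpha_k$ may fail to be continuous, and invoke the lifted mpa continuity property to recover continuity of $\phi\circ\widetilde{\alpha}_k$ there. The one genuine (and minor) divergence is your choice of $w$: the paper takes $w(x) = \min\bigl(d(x,E),\, t(x),\, 1-t(x)\bigr)$, which vanishes not only on the equator $E$ but on all of the boundary sphere $S^k$, whereas your $w(x) = \sqrt{1-|x'|^2}$ vanishes only on $E$. This is an improvement in one respect: near a non-equator boundary point the paper's $w$ tends to $0$, so the continuity of $\widetilde{\alpha}_k$ there cannot be read off directly from the continuity of $s$ on $(0,1]\times Y^2_\preceq$ and formally needs an extra limiting observation (namely that $s_w(y_0,y_1)(t)\to y_0$ as $t\to 0$ regardless of $w$, combined with the sandwich from property (3)); your $w$ stays bounded away from $0$ there, so the joint continuity of $s$ applies without comment, as you observe. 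Both choices make $w\to 0$ at $E$, which is all that the lifted mpa continuity property needs, and your $t(x)$ (affine parameter on the chord) coincides with the paper's $d(u(x),x)/d(u(x),l(x))$. Your sketches of ($\widetilde{\alpha}$-1) and ($\widetilde{\alpha}$-2) are correct but terse; to fully establish ($\widetilde{\alpha}$-1) one should spell out the two cases (whether $u(x)$, resp.\ $l(x)$, is positive or negative) and use ($\alpha$-1), the equivariance $\alpha_k(-x)=\rho(\alpha_k(x))$, and hypotheses (1)--(2), exactly as you indicate.
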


  \begin{proof}[Proof of Claim]
    Let $E \subset S^{k}$ be the equator, the set of points neither in the open
    upper or lower hemisphere. The set $E$ is compact, so the distance $d(x, E)$ for $x
    \in B^{k+1}$ is well-defined and nonzero for $x \notin E$. Define
    $\widetilde{\alpha}_{k} \colon B^{k+1} \rightarrow X_{k+1}$ by
    \begin{align*}
      &\widetilde{\alpha}_{k}(x) = \begin{cases}
        \alpha_{k}(x) & x \in E\\
        s_{w(x)}(\alpha_{k}(u(x)), \alpha_{k}(l(x)))(t(x)) & x \notin E
      \end{cases}\\
      &\qquad\text{where }w(x) = \min(d(x, E), t(x), 1 - t(x))\\
      &\qquad\hphantom{\text{where }}\,\, t(x) = \frac{d(u(x), x)}{d(u(x),
      l(x))}
    \end{align*}
    Note that $l(x) = m(u(x))$, so ($\alpha$-2) implies $\alpha_{k}(u(x))
    \preceq \alpha_{k}(l(x))$, so $s_{w(x)}(\alpha_{k}(u(x)),
    \alpha_{k}(l(x)))$ is well-defined, and (3) gives $\alpha_{k}(u(x)) \preceq
    \widetilde{\alpha}(x) \preceq \alpha_{k}(l(x))$, establishing
    ($\widetilde{\alpha}$-2).

    By ($\alpha$-1), we have $\rho(y^{*}) \preceq \rho(\alpha_{k}(x)) \preceq
    \rho^{k+1}(y^{*})$ for $x$ negative, so $y^{*} \preceq \alpha_{k}(x)
    \preceq \rho^{k+1}(y^{*})$ for all $x \in S^{k}$. Along with the inequality
    above, this implies $y^{*} \preceq \widetilde{\alpha}_{k}(x) \preceq
    \rho^{k+1}(y^{*})$, establishing ($\widetilde{\alpha}$-1).

    The function $\widetilde{\alpha}_{k}$ is continuous for $x \notin E$, since
    $u(-)$, $l(-)$, $d(-, -)$, $d(-, E)$ are all continuous, $u(x), l(x) \notin
    E$, and $\alpha_{k}$ is continuous on the open upper (and hence lower)
    hemisphere. In particular, $\widetilde{\alpha}_{k}$ is continuous in the
    interior of $B^{k+1}$, establishing ($\widetilde{\alpha}$-3).

    It remains to show $\phi \circ \widetilde{\alpha}_{k}$ is continuous at $x
    \in E$. Let $V$ be a neighborhood of $\phi(\widetilde{\alpha}_{k}(x)) =
    \phi(\alpha_{k}(x)) \in Z$, and obtain $\delta > 0$ and a neighborhood $U$
    of $\phi(\alpha_{k}(x)) \in Z$ as in the lifted mpa definition. Since
    $u(-), l(-), d(-, E)$ are continuous, there exists a neighborhood $W
    \subseteq B^{k+1}$ of $x$ such that for all $x' \in W$ we have $d(x', E) <
    \delta$ and $u(x'), l(x') \in (\phi \circ \alpha_{k})^{-1}(U)$, using the
    continuity of $\phi \circ \alpha_{k}$ given by ($\alpha$-4). Then
    $\phi(\alpha_{k}(u(x'))), \phi(\alpha_{k}(l(x'))) \in U$, so the lifted mpa
    property implies $\phi(\widetilde{\alpha}_{k}(x)) \in V$, which shows $\phi
    \circ \widetilde{\alpha}_{k}$ is continuous at $x$, establishing
    ($\widetilde{\alpha}$-4).
  \end{proof}

  We use the claim above to inductively construct $\alpha_{k}\colon S^{k}
  \rightarrow Y$, by extending each $\alpha_{k}$ to a map
  $\widetilde{\alpha}_{k} \colon B^{k+1} \rightarrow Y$, using
  $\widetilde{\alpha}_{k}$ for the upper hemisphere of $\alpha_{k+1}$, and
  extending to the negative hemisphere via $\alpha_{k+1}(-x) =
  \rho(\alpha_{k+1}(x))$. Specifically, we have the following claim:

  \begin{claim}
    For all $k \ge 0$ there exists $\alpha_{k}\colon S^{k} \rightarrow Y$, not
    necessarily continuous, such that $\alpha_{k}$ is good.
  \end{claim}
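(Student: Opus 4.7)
My plan is to prove the claim by induction on $k$, building $\alpha_{k+1}$ from $\alpha_k$ by first applying the preceding claim to extend $\alpha_k$ to $\widetilde{\alpha}_k \colon B^{k+1} \to Y$, then using $B^{k+1}$ as a parametrization of the closed upper hemisphere of $S^{k+1}$, and finally forcing the antipodal extension $\alpha_{k+1}(-x) = \rho(\alpha_{k+1}(x))$ onto the open lower hemisphere.

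For the base case $k = 0$, I will set $\alpha_0(+1) = y^*$ and $\alpha_0(-1) = \rho(y^*)$. Conditions ($\alpha$-3) and ($\alpha$-4) are trivial because $S^0$ is discrete, ($\alpha$-1) is immediate, and ($\alpha$-2) reduces to $y^* \preceq \rho(y^*)$, which is hypothesis (1).

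For the inductive step I will let $p \colon (x_1, \ldots, x_{k+2}) \mapsto (x_1, \ldots, x_{k+1})$ and set $\alpha_{k+1}(x) = \widetilde{\alpha}_k(p(x))$ when $x_{k+2} \ge 0$ and $\alpha_{k+1}(x) = \rho(\widetilde{\alpha}_k(p(-x)))$ when $x_{k+2} < 0$. Because $\widetilde{\alpha}_k$ restricts to $\alpha_k$ on $\partial B^{k+1} = S^k$ (the equator of $S^{k+1}$), the antipodal relation in ($\alpha$-1) for $\alpha_{k+1}$ on the equator reduces to ($\alpha$-1) for $\alpha_k$ and needs no separate definition. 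The bounds in ($\alpha$-1) then follow from ($\widetilde{\alpha}$-1) combined with iterating hypothesis (1), and ($\alpha$-3) follows from ($\widetilde{\alpha}$-3) and continuity of $p$. Condition ($\alpha$-4) follows from ($\widetilde{\alpha}$-4) on each open hemisphere together with the equivariance $\phi \circ \rho = \sigma \circ \phi$; matching the limits across the equator reduces once more to ($\alpha$-1) for $\alpha_k$.

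The delicate step is ($\alpha$-2), which amounts to showing $\widetilde{\alpha}_k(p(x)) \preceq \rho(\widetilde{\alpha}_k(-p(x)))$ for $x$ in the open upper hemisphere of $S^{k+1}$. The key geometric identity is $l(p(x)) = -u(-p(x))$ for the internal hemisphere projections of $B^{k+1}$, and the constraint $x_{k+2} > 0$ forces the last coordinate of $u(-p(x))$ to be strictly positive, so $u(-p(x))$ is a positive point of $S^k$. Then ($\widetilde{\alpha}$-2) gives $\widetilde{\alpha}_k(p(x)) \preceq \alpha_k(l(p(x))) = \alpha_k(-u(-p(x)))$, which by ($\alpha$-1) for $\alpha_k$ equals $\rho(\alpha_k(u(-p(x))))$; applying $\rho$ to the matching half of ($\widetilde{\alpha}$-2) and using hypothesis (2) chains this to $\rho(\widetilde{\alpha}_k(-p(x)))$. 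I expect the main obstacle to be managing this bookkeeping between the internal hemisphere projections of $B^{k+1}$ and the hemispheres of $S^{k+1}$, while carefully tracking which points are positive in the paper's sense.
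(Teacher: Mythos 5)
Your proposal is correct and follows essentially the same inductive construction as the paper: extend $\alpha_k$ to $\widetilde{\alpha}_k$ via the preceding claim, project the closed upper hemisphere of $S^{k+1}$ to $B^{k+1}$, and reflect through $\rho$ to get the lower hemisphere. The only cosmetic difference is that you split $S^{k+1}$ by the sign of $x_{k+2}$ rather than by the paper's positive/negative convention on all of $S^{k+1}$; as you observe, the two prescriptions agree on the equator because $\widetilde{\alpha}_k$ restricts to $\alpha_k$ there (together with $\alpha_k(-y) = \rho(\alpha_k(y))$ for $y$ positive), so the resulting function is the same. Your argument for $(\alpha\text{-}2)$ via $l(p(x)) = -u(-p(x))$ and the positivity of $u(-p(x))$ is exactly the paper's key inequality, and the remaining verifications go through as you describe.
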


  \begin{proof}[Proof of Claim]
    We use induction. For the base case, use $\pm 1$ to denote the points of
    $S^{0}$; then let $\alpha_{0}$ map $\pm 1$ to $y^{*}, \rho(y^{*})$,
    respectively. Then $\alpha_{0}$ is good.

    Given $\alpha_{k}$ good and $\widetilde{\alpha}_{k}$ obtained through the
    previous claim, we now construct $\alpha_{k+1} \colon S^{k+1} \rightarrow
    Y$. Let $\pi\colon S^{k+1}_{\ge 0} \rightarrow B^{k+1}$ be the projection
    of the closed upper hemisphere onto the first $k + 1$ coordinates. We
    define maps on the two closed hemispheres as follows:
    \begin{align*}
      (\alpha_{k+1})_{\ge 0} \colon S^{k+1}_{\ge 0} \rightarrow Y &\qquad x
      \mapsto \widetilde{\alpha}_{k}(\pi(x))\\
      (\alpha_{k+1})_{\le 0} \colon S^{k+1}_{\le 0} \rightarrow Y &\qquad x
      \mapsto \rho(\widetilde{\alpha}_{k}(\pi(-x)))
    \end{align*}
    Finally, we define $\alpha_{k+1}$ by $x \mapsto (\alpha_{k+1})_{\ge 0}(x)$
    for $x$ positive and $x \mapsto (\alpha_{k+1})_{\le 0}(x)$ for $x$
    negative.

    For $\alpha_{k+1}$, ($\alpha$-1) holds by construction, due to
    ($\widetilde{\alpha}$-1). Next, since $\widetilde{\alpha}_{k}$ is
    continuous in the interior of $B^{k+1}$, we have that $(\alpha_{k+1})_{\ge
    0}$ is continuous on the open upper hemisphere, hence $\alpha_{k+1}$ is
    also, so ($\alpha$-3) holds also.

    Since $\widetilde{\alpha}_{k}$ satisfies $\widetilde{\alpha}_{k}(-x) =
    \rho(\widetilde{\alpha}_{k}(x))$ for positive $x$ on the boundary sphere
    $S^{k}\subset B^{k+1}$, we have $(\alpha_{k+1})_{\le 0}(x) =
    \rho^{2}((\alpha_{k+1})_{\ge 0}(x))$ for positive $x$ on the equator
    $S^{k}\subset S^{k+1}$, and $(\alpha_{k+1})_{\le 0}(x) =
    (\alpha_{k+1})_{\ge 0}(x)$ for negative $x$ on the equator. Hence $\phi
    \circ (\alpha_{k+1})_{\ge 0}, \phi \circ (\alpha_{k+1})_{\le 0}$ agree on
    the equator, since $\phi \circ \rho^{2} = \sigma^{2} \circ \phi = \phi$.
    Moreover, both composites are continuous; for the second, we have
    $$\phi \circ (\alpha_{k+1})_{\le 0} = \phi \circ \rho \circ
    \widetilde{\alpha}_{k} \circ \pi \circ (-) = \sigma \circ (\phi \circ
    \widetilde{\alpha}_{k}) \circ \pi \circ (-)$$
    and $\sigma, \phi \circ \widetilde{\alpha}_{k}, \pi, (-)$ are continuous.
    Hence ($\alpha$-4) holds.

    Before showing ($\alpha$-2), we show that ($\widetilde{\alpha}$-2) implies
    $$\widetilde{\alpha}_{k}(x) \preceq \rho(\widetilde{\alpha}_{k}(-x))$$
    for all $x \in B^{k+1}$ not on the equator. For such $x$, $u(-x)$ is on the
    open upper hemisphere and hence is positive. By ($\widetilde{\alpha}$-2),
    we have
    $$\widetilde{\alpha}_{k}(x) \preceq \alpha_{k}(l(x)) = \alpha_{k}(-u(-x)) =
    \rho(\alpha_{k}(u(-x))) \preceq \rho(\widetilde{\alpha}_{k}(-x)).$$
    This proves the inequality above.

    Now we show ($\alpha$-2). For $x \in S^{k+1}$ in the open upper hemisphere,
    we have
    $$\alpha_{k + 1}(x) = \widetilde{\alpha}_{k}(\pi(x)) \preceq
    \rho(\widetilde{\alpha}_{k}(-\pi(x))) =
    \rho(\widetilde{\alpha}_{k}(\pi(-x))) = \alpha_{k + 1}(m(x))$$
    by the inequality above. Hence ($\alpha$-2) holds.
  \end{proof}

  Taking $\beta_{n} = \phi \circ \alpha_{n}$, Theorem \ref{mpa} follows from
  the claims above. To see that $\beta_{n}$ is a $\Z/2$-map, note that for $x
  \in S^{n}$ positive, we have
  $$\beta_{n}(-x) = \phi(\alpha_{n}(-x)) = \phi(\rho(\alpha_{n}(x))) =
  \sigma(\phi(\alpha_{n}(x))) = \sigma(\beta_{n}(x))$$
  The other conclusions of the theorem are clear.
\end{proof}

%

For a full lifted mpa, the preorder conditions of Theorem~\ref{mpa} are trivially satisfied, so we get:

\begin{corollary}
  Let $Y, Z$ be topological spaces, equip $Y$ with a $\Z$-action generated by
  $\rho\colon Y \rightarrow Y$, and equip $Z$ with a $\Z/2$-action generated
  by $\sigma\colon Z \rightarrow Z$. Let $\phi\colon Y \rightarrow Z$ be
  continuous and equivariant, i.e., $\sigma\circ\phi =  \phi\circ\rho$.
  If there is a full lifted mpa for $(Y, Z, \phi)$, then there exists a
  $\Z/2$-map $\beta_{n}\colon S^{n} \rightarrow Z$ for all integers~$n \ge 0$.
\end{corollary}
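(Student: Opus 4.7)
The plan is to deduce this corollary as an immediate specialization of Theorem~\ref{mpa}. The key observation is that a full lifted mpa for $(Y, Z, \phi)$ is, by definition, a lifted mpa for $(Y, Z, \phi, \preceq)$ where $\preceq$ is the total preorder in which $y_0 \preceq y_1$ holds for every pair $(y_0, y_1) \in Y \times Y$. With this choice of preorder, $Y^2_{\preceq} = Y^2$, so the full lifted mpa is defined on the whole product and the data required by Theorem~\ref{mpa} are present.

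Next I would verify that hypotheses~(1), (2), and~(3) of Theorem~\ref{mpa} are automatically satisfied by this trivial preorder. Since every pair of elements of $Y$ is comparable, $y \preceq \rho(y)$ holds for all $y$, giving~(1); the biconditional $\rho(y_0) \preceq \rho(y_1) \Leftrightarrow y_0 \preceq y_1$ in~(2) reduces to ``true $\Leftrightarrow$ true''; and the sandwiching condition~(3), $y_0 \preceq s_w(y_0,y_1)(t) \preceq y_1$, likewise holds vacuously.

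Finally, with all hypotheses of Theorem~\ref{mpa} met, I would simply invoke that theorem to produce, for each integer $n \ge 0$, a $\Z/2$-map $\beta_n \colon S^n \to Z$, which is exactly the conclusion of the corollary. The additional ``moreover'' statement in Theorem~\ref{mpa} about positive points being sent into $\{\phi(y) : y^* \preceq y \preceq \rho^n(y^*)\}$ is not needed here, so it can be dropped.

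There is essentially no obstacle: the only ``work'' is checking that the trivial preorder makes the hypotheses of Theorem~\ref{mpa} vacuously true and that the domain $Y^2_{\preceq}$ of a lifted mpa becomes the full product $Y^2$, matching the definition of ``full.'' The proof is a few lines long and is really just a bookkeeping check.
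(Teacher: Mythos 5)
Your argument matches the paper exactly: the paper introduces this corollary with the one-line remark that ``for a full lifted mpa, the preorder conditions of Theorem~\ref{mpa} are trivially satisfied,'' and your proposal is simply the spelled-out version of that observation, correctly checking that the trivial preorder makes hypotheses (1)--(3) hold and that $Y^2_\preceq = Y^2$. Correct, and same approach.
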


\section{Constructing a lifted mpa}
\label{sec:construction}

The goal of this section is to prove our main result, Theorem~\ref{main}, by
constructing a lifted mpa satisfying the conditions of Theorem~\ref{mpa}. As a
warm-up, we use Theorem~\ref{mpa} to prove the Hobby-Rice theorem, Theorem
\ref{hr}:

\begin{proof}[Proof of Theorem~\ref{hr}]
  The idea is to lift the space of functions with range in $\{\pm 1\}$ to
  nondecreasing functions with range in $\mathbb{Z}$. By describing a
  continuous map from pairs of such functions to paths between them, we will
  produce a lifted mpa, which will imply the result by Theorem \ref{mpa}.

  Let $Y$ be the space of nondecreasing functions $g\colon [0, 1] \rightarrow
  \mathbb{Z}$ with finite range, and let $Z$ be the space of functions $h\colon
  [0, 1] \rightarrow \{\pm 1\}$. Equip $Y, Z$ with the $L^{1}$-norm, and define
  $\rho(g) = g + 1$, $\sigma(h) = -h$, and
  $$\phi(g)(x) = \begin{cases}
    1 & g(x) \text{ even}\\
    -1 & g(x) \text{ odd}
  \end{cases}$$
  Let $g_{0} \preceq g_{1}$ if $g_{0}(x) \le g_{1}(x)$ for all $x \in [0, 1]$.
  Finally, for $g_{0} \preceq g_{1}$ define $s_{w}(g_{0}, g_{1})$ to be the
  path (in $t$) of functions following $g_{0}$ on $[0, 1 - t)$ and $g_{1}$ on
  $[1 - t, 1]$:
  $$s_{w}(g_{0}, g_{1})(t)(x) = \begin{cases}
    g_{0}(x) & x < 1 - t\\
    g_{1}(x) & x \ge 1 - t
  \end{cases}$$
  Note that $s_{w}$ is independent of $w$. The conditions of Theorem \ref{mpa}
  are straightforward to check, except perhaps the continuity property in the
  lifted mpa definition, which we check now.
  
  We are given $g \in Y$, and we may assume $V$ is a basis set, so that $V$
  consists of all $h \in Z$ with $\|h - \phi(g)\| < \varepsilon$ for some
  $\varepsilon > 0$. By our choice of $U$ we may ensure that $g_{0}, g_{1} \in
  Y$ have the same parity as $g$ except on a sets $S_{0}, S_{1}$ with
  $\mu(S_{i}) < \varepsilon / 4$. Then functions $g'$ along the path
  $s_{w}(g_{0}, g_{1})$ have the same parity as $g$ except on $S_{0} \cup
  S_{1}$, where $\mu(S_{0} \cup S_{1}) < \varepsilon / 2$, which implies
  $\|\phi(g') - \phi(g)\| < \varepsilon$.

  Hence the conditions of Theorem \ref{mpa} are satisfied, so we obtain a
  $\Z/2$-map $\beta_{n}\colon S^{n} \rightarrow Z$. Applying the Borsuk--Ulam theorem to
  $\psi \circ \beta_{n}\colon S^{n} \rightarrow \R^{n}$, where $\psi\colon h
  \mapsto (\int_{0}^{1}f_{j}(x)h(x)dx)_{j}$, we obtain $x \in S^{n}$ with
  $\psi(\beta_{n}(x)) = 0$. Hence also $\psi(\beta_{n}(-x)) = 0$, so we may
  assume $x$ is positive. Taking $y^{*} = 0$ in the last part of Theorem
  \ref{mpa}, we may ensure that $\beta_{n}$ maps each positive point of $S^{n}$
  to a point in $Z$ of the form $\phi(g)$ with $0 \le g \le n$, so that
  $\phi(g)$ has at most $n$ sign changes. This completes the proof.
\end{proof}

Now we prove our main result, Theorem~\ref{main}:

\begin{proof}[Proof of Theorem~\ref{main}]
  Consider the space $C^{\infty}([0, 1]; \R)$ with the $L^{1}$-norm, and let
  $Y$ be the subspace of nondecreasing functions in $C^{\infty}([0, 1]; \R)$,
  equipped with the action $\rho\colon g \mapsto g + \pi$. Let $Z$ be
  $C^{\infty}([0, 1]; S^{1})$ with the $L^{1}$-norm, equipped with the action
  $\sigma\colon h \mapsto -h$.
  
  Define $\phi\colon Y \rightarrow Z$ by $\phi(g)(x) = e^{ig(x)}$; then $\phi$
  is continuous since $x \mapsto e^{ix}$ is 1-Lipschitz:
  \begin{align*}
    \|\phi(g_{2}) - \phi(g_{1})\|_{1} &= \int_{0}^{1}|e^{ig_{2}(x)} -
    e^{ig_{1}(x)}|dx\\
    &\le \int_{0}^{1}|g_{2}(x) - g_{1}(x)|dx\\
    &\le \|g_{2} - g_{1}\|_{1}.
  \end{align*}
  Define $(\preceq)$ on $Y$ as $(\le)$ pointwise. Then properties (1) and (2) of Theorem
  \ref{mpa} and the commutativity property $\phi \circ \rho = \sigma \circ
  \phi$
  evidently hold.
  
  It remains to construct the lifted mpa~$s$. Let $\tau\colon \R \rightarrow
  [0, 1]$ be a smooth, nondecreasing function with $\tau(x) = 0$ for $x \le
  -1$, and $\tau(x) = 1$ for $x \ge 1$. (For example, take an integral of a
  mollifier.) Then define $s_{w}\colon Y_{\preceq}^{2} \rightarrow PY$ by
  $$s_{w}(g_{0}, g_{1})(t)(x) = \left(1 - \tau\left(\frac{x - (1 -
  t)}{w}\right)\right)g_{0}(x) + \tau\left(\frac{x - (1 -
  t)}{w}\right)g_{1}(x).$$
  Since $\tau$ is smooth, and since $x \mapsto (x - (1 - t))/w$ is smooth for
  $w \ne 0$, the function $s_{w}(g_{0}, g_{1})(t)\colon [0, 1] \rightarrow \R$
  is smooth. Also, $s_{w}(g_{0}, g_{1})(t)$ is nondecreasing:
  \begin{align*}
    &\frac{d}{dx}[s_{w}(g_{0}, g_{1})(t)(x)]\\
    &\qquad = -\frac{1}{w} \cdot \tau'\left(\frac{x - (1 - t)}{w}\right) \cdot
    g_{0}(x) + \left(1 - \tau\left(\frac{x - (1 - t)}{w}\right)\right) \cdot
    g_{0}'(x)\\
    &\qquad\qquad + \frac{1}{w}\cdot\tau'\left(\frac{x - (1 -
    t)}{w}\right) \cdot g_{1}(x) + \tau\left(\frac{x - (1 - t)}{w}\right) \cdot
    g_{1}'(x)\\
    &\qquad \ge \frac{1}{w}\cdot\tau'\left(\frac{x - (1 - t)}{w}\right) \cdot
    (g_{1}(x) - g_{0}(x))\\
    &\qquad \ge 0.
  \end{align*}
  Therefore, $s_{w}(g_{0}, g_{1})$ takes values in $PY$. Since $g_{0} \le
  g_{1}$, we have $g_{0} \le s_{w}(g_{0}, g_{1})(t) \le g_{1}$, so property~(3)
  of Theorem~\ref{mpa} holds.

  Next we show $s_{w}(g_{0}, g_{1})(t)$ is continuous in $w, g_{0}, g_{1}, t$.
  First we establish a helpful result. Let $B$ be the subspace of
  $L^{\infty}([0, 1]; \mathbb{R})$ consisting of smooth functions, and let
  $\widetilde{Y}$ be the space $L^{1}([0, 1]; \R)$, of which $Y$ is a subspace;
  then pointwise multiplication $(b, g) \mapsto b \cdot g$ defines a continuous
  map $B \times \widetilde{Y} \rightarrow \widetilde{Y}$, via the following inequality,
  using H\"older's inequality:
  \begin{align*}
    \|b_{2}g_{2} - b_{1}g_{1}\|_{1} &\le \|b_{2}(g_{2} - g_{1})\|_{1} +
    \|g_{1}(b_{2} - b_{1})\|_{1}\\
    &\le \|b_{2}\|_{\infty} \cdot \|g_{2} - g_{1}\|_{1} + \|g_{1}\|_{1} \cdot
    \|b_{2} - b_{1}\|_{\infty}.
  \end{align*}
  Since $(w, g_{0}, g_{1}, t) \mapsto g_{0}$, $(w, g_{0}, g_{1}, t) \mapsto
  g_{1}$ are continuous maps $(0, 1] \times Y \times Y \times [0, 1]
  \rightarrow Y$, by the result above it suffices to show that
  $$(w, g_{0}, g_{1}, t) \mapsto \left(x \mapsto \tau\left(\frac{x - (1 -
  t)}{w}\right)\right)$$
  is a continuous map to $B$; the subtraction from 1 in the first term is
  handled by virtue of the fact that $B$ is a normed linear space, so that
  pointwise addition and scalar multiplication by $-1$ each define a continuous
  map.

  Since $\tau$ is constant outside of the compact set $[-1, 1]$, $\tau$ is
  uniformly continuous, hence it suffices to prove that
  $$(w, g_{0}, g_{1}, t) \mapsto \left(x \mapsto \frac{x - (1 - t)}{w}\right)$$
  is a continuous map to $B$. Note that
  $$\sup_{x \in [0, 1]}\left|\frac{x}{w_{2}} - \frac{x}{w_{1}}\right| =
  \left|\frac{1}{w_{2}} - \frac{1}{w_{1}}\right|$$
  Since $w \mapsto 1/w$ is a continuous map $\mathbb{R}\setminus\{0\}
  \rightarrow \mathbb{R}$, the map $(w, g_{0}, g_{1}, t) \mapsto (x \mapsto
  x/w)$ is a continuous map to $B$, as is $(w, g_{0}, g_{1}, t) \mapsto (x
  \mapsto -(1 - t)/w)$, so the map above is indeed a continuous map to~$B$.
  Hence $s_{w}(g_{0}, g_{1})(t)$ is continuous in $w, g_{0}, g_{1}, t$.

  It remains to show the continuity property for a lifted mpa. Let $g \in Y$,
  then for $g_{0}, g_{1} \in Y$ we have
  \begin{align*}
    &\|\phi(s_{w}(g_{0}, g_{1})(t)) - \phi(g)\|_{1}\\
    &\qquad = \int_{0}^{1 - t - w}|\phi(g_{0})(x) - \phi(g)(x)|dx + \int_{1 - t
    + w}^{1}|\phi(g_{1})(x) - \phi(g)(x)|dx\\
    &\qquad\qquad + \int_{1 - t - w}^{1 - t + w}|\phi(s_{w}(g_{0},
    g_{1})(t))(x) - \phi(g)(x)|dx\\
    &\qquad \le \|\phi(g_{0}) - \phi(g)\|_{1} + \|\phi(g_{1}) - \phi(g)\|_{1} +
    4w,
  \end{align*}
  where we use the fact that $S^{1}$ has diameter 2 in the last step. This
  inequality implies the continuity property for a lifted mpa.
  
  Therefore, we may apply Theorem \ref{mpa} to obtain a $\Z/2$-map
  $\beta_{n}\colon S^{n} \rightarrow Z$. Then $\psi \circ \beta_{n}\colon S^{n}
  \rightarrow \R^{n}$ is a $\Z/2$-map, so by the Borsuk--Ulam theorem, we have
  $\psi(\beta_{n}(x)) = 0$ for some $x \in S^{n}$, and we may assume $x$ is
  positive. Taking $y^{*} = c_{0}$ in the last part of Theorem \ref{mpa}, we
  have $\rho^{n}(y^{*}) = c_{n}$, so we may ensure that $h = \beta_{n}(x)$ is
  of the form $\phi(g)$ for $g \in Y$, where $g$ is an increasing function with
  range in~$[0, \pi n]$. This gives the desired $W^{1, 1}$-norm bound:
  $$\int_{0}^{1}\left|\frac{d}{dx}[e^{ig(x)}]\right|dx = \int_{0}^{1}|g'(x)|dx =
  g(1) - g(0) \le \pi n,$$
  which implies $\|h\|_{W^{1, 1}} \le 1 + \pi n$.
\end{proof}

\section{Improving the bound further}

In the introduction we argued that a $W^{1,1}$-norm bound of $1+2\pi n$ in Theorem~\ref{ll}
might be expected from smoothing the Hobby--Rice theorem.
In this section, we show an improved bound for Theorem~\ref{ll} in
the case where the $f_{j}$ are real-valued. The idea is to modify the $S^{1}$
step of our construction so that some functions in the image of $\alpha_{k}$
have smaller range within $[0, \pi k]$, and to modify the later steps so that
functions $h$ in the image of $\alpha_{k}$ with large range have $\psi(\phi(h))
\ne 0$.

\begin{theorem}
  Let $f_{1}, \ldots , f_{n} \in L^{1}([0, 1]; \mathbb{R})$. Then there exists
  $h \in C^{\infty}([0, 1]; S^{1})$ such that for all~$j$,
  $$\int_{0}^{1}f_{j}(x)h(x)dx = 0.$$
  Moreover, for any $\varepsilon > 0$, $h$ can be chosen such that
  $$\|h\|_{W^{1,1}} < 1 + \pi(2n - 1) + \varepsilon.$$
\end{theorem}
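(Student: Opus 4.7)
The plan is to follow the inductive framework of Theorem~\ref{mpa} as applied in Section~\ref{sec:construction}, but make two modifications that together save $\pi-\varepsilon$ in the range bound, using in an essential way that the $f_j$ are real-valued. Following the hint, the first modification is at the $S^1$ step of the induction (from $\alpha_0$ to $\alpha_1$), and the second modification is at the subsequent steps for the purpose of ruling out large-range zeros.

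For the first modification, I would replace the standard interpolation $\widetilde{\alpha}_0\colon B^1\to Y$ from $c_0$ to $c_\pi$ (whose intermediate values are smoothed step functions of range approximately $\pi$) with an enlarged family of paths that also contains paths passing through constant (or near-constant) intermediate functions $c_a$ for $a\in (0,\pi)$. Concretely, one can glue the lifted mpa $s_w(c_0,c_{\pi/2})$ on $[\tfrac12,1]$, a constant segment at $c_{\pi/2}$ near $x=0$, and the lifted mpa $s_w(c_{\pi/2},c_\pi)$ on $[-1,-\tfrac12]$, parameterizing the ``height reduction'' by an extra auxiliary variable $r\in[0,1]$ with $r=0$ the standard path and $r=1$ the compound one. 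Propagating this richer base through the induction of Theorem~\ref{mpa} produces $\alpha_{2n}\colon S^{2n}\to Y$ whose image splits into a ``small-range'' subset $G\subset S^{2n}$ on which $g=\alpha_{2n}(x)$ has range at most $\pi(2n-1)+\varepsilon$, and a complementary ``large-range'' subset on which $g$ still ranges up to $2\pi n$.

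For the second modification, I would adjust the extensions $\widetilde{\alpha}_k$ at each subsequent step so that any point $x$ whose image has range exceeding $\pi(2n-1)+\varepsilon$ satisfies $\psi(\phi(\alpha_{2n}(x)))\neq 0$. The real-valued hypothesis on $f_j$ is crucial here: the map $\psi\colon Z\to\C^n$ is $S^1$-equivariant (since $\psi(e^{i\alpha}h) = e^{i\alpha}\psi(h)$) and additionally conjugate-equivariant, which together make the zero locus of $\psi$ a ``thinner'' invariant subset than a naive codimension count suggests, effectively one codimension smaller than in the complex case. A generic-position/equivariant transversality argument, exploiting the auxiliary parameter introduced in the first modification and an $\varepsilon$-sized perturbation, separates the large-range image from this zero locus while preserving $\Z/2$-equivariance and the continuity and ordering properties ($\widetilde{\alpha}$-1)--($\widetilde{\alpha}$-4).

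Finally, applying Theorem~\ref{mpa} and the Borsuk--Ulam theorem to $\psi\circ\beta_{2n}\colon S^{2n}\to\mathbb{R}^{2n}$ yields a zero $x\in S^{2n}$. By the second modification, $x$ must lie in $G$ (or its antipode), so $h=\beta_{2n}(x)$ satisfies $\|h\|_{W^{1,1}}<1+\pi(2n-1)+\varepsilon$. The main obstacle will be the second modification: carrying out the equivariant transversality argument concretely, making precise how much ``slack'' the auxiliary parameter provides and verifying that subsequent inductive extensions do not collapse it. The real-valued hypothesis is exactly what prevents this slack from being absorbed into the $S^1$-orbit of the zero locus, and it is the source of the one-dimension saving compared to Corollary~\ref{cor:main}.
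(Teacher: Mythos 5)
There is a genuine gap in the second modification, and it stems from misidentifying the mechanism by which real-valuedness is exploited. You correctly intuit that the $S^1$ step should be enriched to produce functions with smaller range, and that large-range points must then be ruled out as zeros of $\psi$. But the ``generic-position/equivariant transversality argument'' you invoke to rule them out is not carried out and is not obviously available. The $S^1$-equivariance $\psi(e^{i\alpha}h)=e^{i\alpha}\psi(h)$ holds for all $f_j$, real or complex, so it cannot be the source of a one-dimension saving; and conjugate-equivariance $\psi(\bar h)=\overline{\psi(h)}$ makes the zero locus invariant under conjugation, but invariance does not reduce its codimension. Moreover, the inductive framework of Theorem~\ref{mpa} constrains each $\widetilde{\alpha}_k$ to be built from the lifted mpa $s$ with the ordering conditions ($\widetilde{\alpha}$-1)--($\widetilde{\alpha}$-4); an arbitrary ``$\varepsilon$-sized perturbation'' to achieve transversality is not compatible with those constraints, and you acknowledge this is ``the main obstacle'' without resolving it.

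The paper's actual argument is more elementary and quite different in spirit. It replaces $\alpha_1$ directly by $e^{ix}\mapsto c_x$ (so the first coordinate of $x\in S^1$ exactly determines $\cos$ of the constant value) and then maintains, through the induction, the quantitative invariant $(P_{\alpha_k,\delta})$: for each $x\in S^k$, $\Re[e^{i\alpha_k(x)(t)}]=\pi_1(x)$ for $t$ outside a set of $\mu_f$-measure $<\delta$. This is arranged by capping the width parameter $w(x)$ of the lifted mpa by a fixed $\delta'''$ at each step. The real-valuedness of $f_j$ then enters through a direct computation, not a transversality statement: $\Re\!\left[\int f_j e^{i\alpha_{2n}(x)(t)}\,dt\right] \approx \pi_1(x)\int f_j\,dt$, so if some $\int f_j\,dt\neq 0$ (otherwise take $h$ constant), any zero produced by Borsuk--Ulam must have $\pi_1(x)$ near $0$. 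Since the $\alpha_1$-layer then starts near angle $\pi/2$ and each subsequent step extends the range by $\pi$, the range of $\alpha_{2n}(x)$ lies in $[\pi/2-\varepsilon',\ \pi/2+\pi(2n-1)+\varepsilon']$, giving the bound. Your auxiliary parameter $r$ and the transversality step are not needed; the saving comes from the real-part constraint forcing the Borsuk--Ulam zero into the small-range region automatically.
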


\begin{proof}
  Define $Y, Z, \rho, \sigma, \phi, s$ as in the proof of Theorem \ref{main},
  let $y^{*} = c_{0}$, and let $(\preceq)$ be $(\le)$. We will produce
  $\alpha_{n}\colon S^{n} \rightarrow Y$ and $\beta_{n}\colon S^{n} \rightarrow
  Z$ by the inductive construction in the proof of Theorem \ref{mpa}, but we
  modify the first step by defining $\alpha_{1}\colon S^{1} \rightarrow Y$ by
  $e^{ix} \mapsto c_{x}$ for $x \in [0, 2\pi)$. This $\alpha_{1}$ differs from
  the $\alpha_{1}$ obtained in the proof of Theorem~\ref{mpa}, which only gives
  constant functions at $\pm 1 \in S^{1}$, but is still good in the sense introduced 
  in the proof of Theorem~\ref{mpa}. Using this $\alpha_{1}$ as our base case, we 
  inductively construct $\alpha_k$ as before with the following additional condition:
  \begin{align*}
    &\text{For $\delta > 0$ (depending on $k$ and the $f_{j}$), $\alpha_{k}$
    may be chosen such that for all $x$:}\\
    &\qquad\text{Re}[e^{i\alpha_{k}(x)(t)}] = \pi_{1}(x)\qquad\text{for $t \in
    [0, 1] \setminus S$, where $\mu_{f}(S) < \delta$}\qquad (P_{\alpha_{k},
    \delta})
  \end{align*}
  Here $\mu_{f}$ is as in the proof of Corollary~\ref{cor:main}, that is,
  $$\mu_{f}(S) = \int_{0}^{1}|f_{j}(x)|dx,$$ 
  and $\pi_{1}\colon
  S^{k}\rightarrow [-1, 1]$ is the projection to the first coordinate.

  The condition $(P_{\alpha_{k}, \delta})$ holds for $k = 1$ and
  all $\delta > 0$ by our definition of $\alpha_{1}$. To show that the
  condition carries through the inductive step, it suffices to show that given
  $\delta > 0$, there exists $\delta' > 0$ such that given $\alpha_{k}$ such
  that $(P_{\alpha_{k}, \delta'})$ holds, we can extend $\alpha_{k}$ to
  $\widetilde{\alpha}_{k}$ as in the first claim in the proof of Theorem~\ref{mpa} such that
  $(P_{\widetilde{\alpha}_{k}, \delta})$ holds.

  We accomplish this by modifying the definition of $\widetilde{\alpha}_{k}$ in 
  the first claim in the proof of Theorem~\ref{mpa} to impose a universal upper bound on
  $w(x)$. Since $\mu_{f}$ is absolutely continuous with respect to Lebesgue
  measure~$\lambda$, for $\delta'' > 0$ there exists $\delta''' > 0$ such that $\lambda(S) \le
  2\delta'''$ implies $\mu_{f}(S) < \delta''$. Then we use $\delta'''$ as our
  upper bound on $w(x)$:
  \begin{align*}
    &\widetilde{\alpha}_{k}(x) = \begin{cases}
      \alpha_{k}(x) & x \in E\\
      s_{w(x)}(\alpha_{k}(u(x)), \alpha_{k}(l(x)))(t(x)) & x \notin E
    \end{cases}\\
    &\qquad\text{where }w(x) = \min(d(x, E), t(x), 1 - t(x), \delta''')\\
    &\qquad\hphantom{\text{where }}\,\, t(x) = \frac{d(u(x), x)}{d(u(x), l(x))}
  \end{align*}
  This ensures that functions in the image of $\widetilde{\alpha}_{k}$ are equal to
  one of the functions $\alpha_{k}(u(x)), \alpha_{k}(l(x))$ except on a set $S$
  with $\mu_{f}(S) < \delta''$. Hence we may take $\delta' = \delta'' = \delta
  / 2$; then $(P_{\widetilde{\alpha_{k}}}, \delta)$ holds as desired. This shows
  that for any $\delta > 0$, $\alpha_{k}$ may be chosen such that
  $(P_{\alpha_{k}, \delta})$ holds.

  Now we apply the Borsuk--Ulam theorem as before. We have the following diagram:
  $$S^{2n} \xrightarrow[(\mathbb{Z}/2)]{\phi \circ \alpha_{2n}} Z
  \xrightarrow[(\mathbb{Z}/2)]{\psi} \mathbb{C}^{n}$$
  The composition $\psi \circ \phi \circ \alpha_{2n}$ is a
  $\mathbb{Z}/{2}$-map, so the Borsuk--Ulam theorem implies that it has a zero; that is,
  there exists $x \in S^{2n}$ such that for all $j$, we have
  $$\int_{0}^{1}f_{j}(t)e^{i\alpha_{2n}(x)(t)}dt = 0.$$
  Moreover, we may assume $x \in S^{2n}$ is positive.
  
  But by the above, we have for the real parts, for all~$j$,
  \begin{align*}
    &\Re\left[\int_{0}^{1}f_{j}(t)e^{i\alpha_{2n}(x)(t)}dt\right]\\
    &\qquad = \int_{0}^{1}f_{j}(t) \cdot \Re[e^{i\alpha_{2n}(x)(t)}]dt\\
    &\qquad = \pi_{1}(x) \cdot \int_{0}^{1}f_{j}(t)dt +
    \int_{S}f_{j}(t)(\Re[e^{i\alpha_{2n}(x)(t)}] - \pi_{1}(x))dx.
  \end{align*}
  We can bound the last term as follows:
  \begin{align*}
    \left|\int_{S}f_{j}(t)(\Re[e^{i\alpha_{2n}(x)(t)}] -
    \pi_{1}(x))dx\right| &\le \int_{S}|\Re[e^{i\alpha_{2n}(x)(t)}] -
    \pi_{1}(x)|d\mu_{f}\\
    &\le 2\mu_{f}(S).
  \end{align*}

  Now if all $\int_{0}^{1}f_{j}(t)dt$ are $0$, then we may take $h$ to be an
  arbitrary constant, which gives ${\|h\|_{W^{1, 1}} = 1}$. Hence we may assume
  that some $\int_{0}^{1}f_{j}(t)dt$ is nonzero. In this case, we may ensure
  that for the $x$ with $(\psi \circ \phi \circ \alpha_{2n})(x) = 0$ guaranteed
  by the Borsuk--Ulam theorem, $\pi_{1}(x)$ is smaller than any constant we like, by taking
  $\delta$ small in $(P_{\alpha_{2n}, \delta})$. In particular, choose $\delta$
  sufficiently small such that $|\Re[e^{i\theta}]| < \delta$ implies $|\theta -
  \pi / 2| < \varepsilon'$ for $\theta \in [0, \pi]$.

  Now we analyze the ranges of functions $\alpha_{k}(x)\colon [0, 1]
  \rightarrow \mathbb{R}$ with $x$ positive and $|\pi_{1}(x)| < \delta$, using
  the fact that functions $\alpha_{k + 1}(x)$ are produced as transition
  functions between two functions $\alpha_{k}(x'), \alpha_{k}(x'')$ with
  $\pi_{1}(x') = \pi_{1}(x'') = \pi_{1}(x)$. For $k = 1$, $\alpha_{k}(x)$ has
  range in $[\pi / 2 - \varepsilon', \pi / 2 + \varepsilon']$, and each
  increment of $k$ extends the right end of this interval by $\pi$. Hence
  $\alpha_{2n}(x)$ has range in
  $$[\pi / 2 - \varepsilon', \pi / 2 + \pi (2n - 1) + \varepsilon'].$$
  Hence taking $h = \phi(\alpha_{2n}(x))$ gives $\|h\|_{W^{1, 1}} \le 1 + \pi
  (2n - 1) + 2\varepsilon'$. Choosing $\varepsilon' < \varepsilon / 2$ gives
  the desired result.
\end{proof}

\section{A lower bound}

We ask whether $\|h\|_{W^{1,1}} \le 1 + 2 n \pi$ is the best possible
bound in Theorem~\ref{ll}. We prove a lower bound of $1 + n \pi$ in the case
that the $f_{j}$ are real-valued, which implies the same lower bound
in the case that the $f_{j}$ are complex-valued.

\begin{theorem}
\label{thm:lowerbound}
  There exist $f_{1}, \ldots , f_{n} \in L^{1}([0, 1]; \mathbb{R})$, such that
  for any $h \in C^{1}([0, 1]; S^{1})$ with
  $$\int_{0}^{1}f_{j}(x)h(x)dx = 0\qquad j = 1, \ldots , n$$
  we have $\|h\|_{W^{1,1}} > \pi n + 1$.
\end{theorem}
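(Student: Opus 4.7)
The plan is to take indicator functions of an arbitrary partition as the $f_j$. Fix any partition $0 = a_0 < a_1 < \cdots < a_n = 1$ and set $f_j = \chi_{[a_{j-1}, a_j]} \in L^1([0,1]; \R)$. The orthogonality conditions $\int_0^1 f_j(x) h(x)\, dx = 0$ then become $\int_{I_j} h(x)\, dx = 0$ on each subinterval $I_j = [a_{j-1}, a_j]$. Since $|h(x)| \equiv 1$ gives $\int_0^1 |h(x)|\,dx = 1$, it will suffice to prove that $\int_{I_j} |h'(x)|\,dx > \pi$ for each $j$ and then sum over $j$ to obtain $\|h\|_{W^{1,1}} = 1 + \int_0^1 |h'(x)|\,dx > 1 + \pi n$.

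The work reduces to a single-interval lemma: if $h \in C^1(I; S^1)$ on a compact interval $I$ satisfies $\int_I h(x)\,dx = 0$, then $\int_I |h'(x)|\,dx > \pi$. To prove it, I would lift $h$ to $h = e^{ig}$ with $g \in C^1(I; \R)$, which is possible since $I$ is simply connected. Then $|h'| = |g'|$, so $\int_I |h'|\,dx$ equals the total variation of $g$ on $I$, which is bounded below by the range $\sup g - \inf g$. If this range were strictly less than $\pi$, a rotation would place $g(I) \subset (-\pi/2, \pi/2)$, making $\Re(e^{ig(x)}) > 0$ pointwise and contradicting $\int_I h = 0$. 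This yields the non-strict inequality $\int_I |h'|\,dx \ge \pi$.

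The hard part is promoting this to a strict inequality. Suppose $\int_I |h'|\,dx = \pi$. Then the total variation of $g$ equals its range, which forces $g$ to be monotonic with range exactly $\pi$. After rotating $h$ and, if necessary, replacing $h$ by $\overline{h}$ (which also satisfies $\int_I \overline{h} = 0$), we may assume $g$ is continuously nondecreasing from $-\pi/2$ to $\pi/2$. Then $\Re(\int_I h\,dx) = \int_I \cos(g(x))\,dx$ with $\cos(g(x)) \ge 0$ on $I$; by the intermediate value theorem $g$ attains values in $(-\pi/2, \pi/2)$ on an open subinterval of $I$, where $\cos(g(x)) > 0$, so $\Re(\int_I h) > 0$, contradicting the hypothesis.

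I expect this boundary-case analysis to be the main obstacle, since the bound $\int_I |h'|\,dx \ge \pi$ is sharp without the zero-integral constraint, and the strict inequality genuinely relies on the incompatibility of monotonic lifts with $\int_I h = 0$. Everything else is routine: indicators of a partition are the natural real-valued test functions, and summing the strict per-interval bound over $j = 1, \ldots, n$ yields the claimed lower bound.
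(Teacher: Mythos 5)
Your proposal is correct and follows essentially the same route as the paper: indicator functions of disjoint intervals reduce the claim to a single-interval estimate, which is proved by lifting $h = e^{ig}$ and bounding $\int|h'|=\int|g'|$ below by the range of $g$. The one minor difference is in the boundary case: you separately establish that $\int|g'|=\pi$ forces $g$ to be monotone (via ``total variation equals range''), while the paper reaches the same contradiction more directly by normalizing $g$ into $[0,\pi]$ whenever $\int|g'|\le\pi$ and observing that $\mathrm{Im}(h)=\sin\circ g$ is continuous, nonnegative, and has zero integral, hence vanishes identically, forcing $h$ to be constant in a single step.
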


\begin{proof} Consider the case $n = 1$, and take $f_{1}$ constant and nonzero.
  Suppose for contradiction that $\|h\|_{W^{1,1}} \le \pi + 1$, and write
  $h(x)$ as $e^{ig(x)}$ for $g \in C^{1}([0, 1]; \R)$, so that
  $\int_{0}^{1}|g'(x)|dx \le \pi$. Since $g$ is continuous, $g$ attains its
  minimum $m$ and maximum $M$ on $[0, 1]$. By adding a constant to $g$, we may
  assume $m = 0$; then we have $M \le \pi$.

  Since $f_{1}$ is constant, we have $\int_{0}^{1}h(x)dx = 0$, so
  $\int_{0}^{1}\text{Im}(h(x))dx = 0$. But $\text{Im}(h(x))$ is continuous in
  $x$ and nonnegative, so $\text{Im}(h(x)) = 0$ for all $x$. Hence $h$ is
  constant at either $1$ or $-1$, but this contradicts $\int_{0}^{1}h(x)dx =
  0$. Therefore, $\|h\|_{W^{1,1}} > \pi + 1$ for $n = 1$.

  Now allow $n$ arbitrary, and take each $f_{j}$ to be the indicator function
  on a disjoint interval $I_{j}$. If $\|h\|_{W^{1,1}} \le \pi n + 1$, then
  $\int_{I_{j}}|g'(x)|dx \le \pi$ for some $j$, and we obtain a contradiction
  as above. Therefore, $\|h\|_{W^{1,1}} > \pi n + 1$.
\end{proof}

This $W^{1,1}$-norm bound establishes an upper bound for the coindex of the space of
smooth circle-valued functions with norm at most~${1+\pi n}$:

\begin{theorem}
\label{thm:coindex}
	For integer $n \ge 1$ let $Y_n$ denote the space of $C^\infty$-functions $f\colon [0,1] \to S^1$
	with $\|f\|_{W^{1,1}} \le 1+ \pi n$. Then $$n \le \coind Y_n \le 2n-1.$$
\end{theorem}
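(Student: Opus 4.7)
Both bounds follow by assembling results already established in the paper. Throughout, equip $Y_n$ with the $L^1$-topology inherited from $Z = C^\infty([0,1]; S^1)$, and with the $\Z/2$-action $h \mapsto -h$.

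For the lower bound $n \le \coind Y_n$, the plan is to reuse the $\Z/2$-map $\beta_n\colon S^n \to Z$ constructed in the proof of Theorem~\ref{main} and verify that its image lies in $Y_n$. Taking $y^* = c_0$ in the last clause of Theorem~\ref{mpa}, every positive point $x \in S^n$ is mapped to $\beta_n(x) = \phi(g) = e^{ig}$ for some nondecreasing $g \in C^\infty([0,1]; \R)$ with $0 \le g \le \pi n$. Since $|e^{ig(x)}| = 1$ and $g' \ge 0$, we compute
\[
\|\phi(g)\|_{W^{1,1}} = 1 + \int_0^1 |g'(x)|\,dx = 1 + g(1) - g(0) \le 1 + \pi n,
\]
so $\beta_n(x) \in Y_n$. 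For negative $x$ we have $\beta_n(x) = \sigma(\beta_n(-x)) = -\beta_n(-x)$, and the $W^{1,1}$-norm is invariant under $h \mapsto -h$, so $\beta_n(x) \in Y_n$ as well. Thus $\beta_n$ restricts to a $\Z/2$-map $S^n \to Y_n$, giving $\coind Y_n \ge n$.

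For the upper bound $\coind Y_n \le 2n-1$, the plan is to argue by contradiction using the real-valued lower bound from Theorem~\ref{thm:lowerbound} together with Borsuk--Ulam. Suppose $\coind Y_n \ge 2n$, so that there exists a $\Z/2$-map $f\colon S^{2n} \to Y_n$. Let $f_1, \ldots, f_n \in L^1([0,1]; \R)$ be the functions produced by Theorem~\ref{thm:lowerbound}, and consider
\[
\psi\colon Y_n \to \C^n, \qquad \psi(h) = \left(\int_0^1 f_j(x) h(x)\,dx\right)_{j=1}^n.
\]
By the argument in the proof of Corollary~\ref{cor:main}, $\psi$ is continuous with respect to the $L^1$-topology on $Y_n$, and it is clearly $\Z/2$-equivariant since each $\psi_j$ is $\C$-linear. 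Identifying $\C^n \cong \R^{2n}$, the composition $\psi \circ f\colon S^{2n} \to \R^{2n}$ is a continuous $\Z/2$-map, so by Borsuk--Ulam there exists $x \in S^{2n}$ with $\psi(f(x)) = 0$. The element $h = f(x) \in Y_n$ is a $C^\infty$ (hence $C^1$) function with $\|h\|_{W^{1,1}} \le 1 + \pi n$ satisfying $\int_0^1 f_j(x) h(x)\,dx = 0$ for all $j$, directly contradicting the conclusion of Theorem~\ref{thm:lowerbound}. Hence $\coind Y_n \le 2n-1$.

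The only subtlety worth double-checking is that the map $\beta_n$ from Theorem~\ref{mpa} really does land in $Y_n$ for \emph{every} point of $S^n$, not just positive ones; this is handled by the equivariance together with the norm being $\sigma$-invariant, as noted above. No new construction is required.
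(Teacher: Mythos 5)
Your proposal is correct and follows essentially the same approach as the paper: the lower bound reuses the $\Z/2$-map $\beta_n$ from Theorem~\ref{main}, and the upper bound combines the real-valued lower bound of Theorem~\ref{thm:lowerbound} with the Borsuk--Ulam theorem (you phrase it via the zero-existence version applied to $\psi \circ f$, while the paper radially projects $\psi$ to an equivariant map $Y_n \to S^{2n-1}$ and uses the no-equivariant-map form, but these are equivalent). Your explicit check that $\beta_n$ lands in $Y_n$ on both hemispheres, via $\sigma$-invariance of the $W^{1,1}$-norm, is a small but welcome addition to the paper's terser statement.
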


\begin{proof}
	In the proof of Theorem~\ref{main} we constructed a $\Z/2$-map $\beta_n \colon S^n \to Y_n$,
	which shows that ${\coind Y_n \ge n}$. Let $f_1, \dots, f_n$ be chosen as
	in Theorem~\ref{thm:lowerbound}. Then the map $\psi\colon Y_n \to \R^{2n}$
	given by $\psi(h) = (\int_{0}^{1}f_{j}(x)h(x)dx)_j$ has no zero and is a $\Z/2$-map.
	Thus $\psi$ radially projects to a $\Z/2$-map $Y_n \to S^{2n-1}$. A $\Z/2$-map
	$S^{2n} \to Y_n$ would compose with $\psi$ to a $\Z/2$-map $S^{2n} \to S^{2n-1}$,
	contradicting the Borsuk--Ulam theorem. This implies $\coind Y_n \le 2n-1$.
\end{proof}

\begin{problem}
	Determine the homotopy type of~$Y_n$.
\end{problem}

\section*{Acknowledgements}

The first author would like to thank Marius Lemm for bringing~\cite{lazarev2013}
to his attention.

\bibliographystyle{plain}

\end{document}